\newcommand{\myspace}{\qquad\qquad\qquad}
\newcommand{\cA}{{\mathcal A}}
\newcommand{\cD}{{\mathcal D}}
\newcommand{\cF}{{\mathcal F}} 
\newcommand{\cH}{{\mathcal H}}
\newcommand{\cL}{{\mathcal L}}
\newcommand{\cT}{{\mathcal T}}
\definecolor{DarkOrange}{RGB}{255,140,000} 
\definecolor{DarkOrchid}{RGB}{153,050,204}
\definecolor{DarkRed}{rgb}{0.55,0.00,0.00}
\newtheorem{theorem}{Theorem}[section]
\newtheorem{lemma}[theorem]{Lemma}
\newtheorem{proposition}[theorem]{Proposition}
\newtheorem{remark}[theorem]{Remark}
\newtheorem{remarks}[theorem]{Remarks}
\newtheorem{hypotheses}[theorem]{Hypotheses}
\newtheorem{definition}[theorem]{Definition}
\newtheorem{corollary}[theorem]{Corollary}
\numberwithin{equation}{section}
\date{}
\title[On the regularity of solutions to the MGT equation]{On the regularity of solutions to the Moore-Gibson-Thompson equation:
a perspective via wave equations with memory}
\author{Francesca Bucci}
\address{
Francesca Bucci, Universit\`a degli Studi di Firenze,
{\sl Dipartimento di Matematica e Informatica}, {\rm Via S.~Marta 3, 50139 Firenze, ITALY}
}
\email{francesca.bucci(at)unifi.it}
\author{Luciano Pandolfi}
\address{Luciano Pandolfi, Politecnico di Torino, 
{\sl Dipartimento di Scienze Matematiche ``Giuseppe Luigi Lagrange''}, 
{\rm Corso Duca degli Abruzzi 24, 10129 Torino, ITALY}
}
\email{luciano.pandolfi(at)polito.it}
\begin{document}

 
\begin{abstract}
\noindent
We undertake a regularity analysis of the solutions to initial/boundary value problems
for the (third-order in time) Moore-Gibson-Thompson (MGT) equation.
The key to the present investigation is that the MGT equation falls within a large class of systems with memory, with affine term depending on a parameter. 
For this model equation a regularity theory is provided, which is of also independent interest;
it is shown in particular that the effect of boundary data that are square integrable
(in time and space) is the same displayed by wave equations.
Then, a general picture of the (interior) regularity of solutions corresponding to 
homogeneous boundary conditions is specifically derived for the MGT equation in various
functional settings. This confirms the gain of one unity in space regularity for the time
derivative of the unknown, a feature that sets the MGT equation apart from other PDE
models for wave propagation.
%
The adopted perspective and method of proof enables us to attain as well 
the (sharp) regularity of boundary traces. 

\end{abstract}

\maketitle


\section{Introduction}
The Jordan-Moore-Gibson-Thompson equation is a nonlinear Partial Differential Equation 
(PDE) model which describes the acoustic velocity potential in ultrasound wave propagation;
the use of the constitutive Cattaneo law for the heat flux, in place of the Fourier law,
accounts for its being of third order in time.
The quasilinear PDE is
\begin{equation}\label{Eq:quasilineare}
\tau \psi_{ttt} + \psi_{tt}-c^2\Delta \psi - b\Delta \psi_t=
\frac{\partial}{\partial t}\Big(\frac1{c^2}\frac{B}{2A}\psi^2_t+|\nabla \psi|^2\Big)
\end{equation}
in the unknown $\psi=\psi(t,x)$, that is the acoustic velocity potential (then 
$-\nabla \psi$ is the acoustic particle velocity), $A$ and $B$ being suitable constants; 
{\em cf.}~Moore \& Gibson \cite{moore-gibson_1960}, Thompson \cite{thompson_1972}, 
Jordan \cite{jordan_2009}. 
For a brief overview on nonlinear acoustics, along with a list of relevant references,
see the recent paper by Kaltenbacher \cite{kalt_2015}.
Aiming at the understanding of the nonlinear equation, a great deal of attention has
been recently devoted to its linearization---referred to in the literature as the 
Moore-Gibson-Thompson (MGT) equation---whose mathematical analysis is also of independent 
interest, posing already several questions and challenges. 

Let $\Omega\subset \mathbb{R}^n$ be a region with smooth ($C^2$) boundary 
$\Gamma:=\partial\Omega$.
(It is a natural conjecture that existence results for wave equations in non-smooth domains
({\em cf.}~\cite{Grisvard}) can be extended to wave equations with memory and to the MGT equation, by using the methods we present in this paper.
We consider the MGT equation
\begin{equation}\label{e:mgt}
\tau u_{ttt}+\alpha u_{tt} -c^2 \Delta u -b \Delta u_t =0 
\qquad \text{in $(0,T)\times\Omega$}
\end{equation}
in the unknown $u=u(t,x)$, $t\ge 0$, $x\in \Omega$, representing the acoustic velocity potential or alternatively, the acoustic pressure (see \cite{kalt-las-posp_2012}
for a discussion on this issue).
The coefficients $c$, $b$, $\alpha$ are constant and positive; they represent the speed and diffusivity of sound ($c$, $b$), and, respectively, a viscosity parameter ($\alpha$). 
For simplicity we set $\tau=1$ throughout the paper.
 
Equation \eqref{e:mgt} is supplemented with initial and boundary conditions:
\begin{align} 
& u(0,\cdot)=u_0\,,\; u_t(0,\cdot)=u_1\,,\; u_{tt}(0,\cdot)=u_2(x)\,, 
& \text{in $(0,T)\times\Omega$}
\label{e:IC}
\\[1mm]
& \cT u(t,\cdot) =g(t,\cdot)  
& \text{on $(0,T)\times\Gamma$};
\label{e:BC}
\end{align}
$\cT$ denotes here a boundary operator, which---for the sake of simplicity---associates to a function either the trace on $\Gamma$, or the outward normal derivative 
$\frac{\partial}{\partial \nu}\big|_\Gamma$ (it would be the {\em conormal} derivative,
in the case of a more general elliptic operator than the Laplacian).

The original studies of the MGT equation with homogenous (Dirichlet or Neumann) boundary data 
carried out in Kaltenbacher~{\em et al.} \cite{kalt-etal_2011} and Marchand {\em et al.} \cite{marchand-etal_2012}~establish appropriate functional settings for semigroup well-posedenss, as well as stability and spectral properties of the dynamics, depending on the
parameters values.
They obtain, in particular,
\begin{enumerate}
\item[i)]
that assuming $b>0$ the linear dynamics is governed by a strongly continuous {\em group} in
the function space $H^1_0(\Omega)\times H^1_0(\Omega)\times L^2(\Omega)$ (Dirichlet BC),
or ($H^1(\Omega)\times H^1(\Omega)\times L^2(\Omega)$ (Neumann BC); 
\item[ii)]
that in the case $b=0$ the associated initial/boundary value problems 
are ill-posed ({\em cf.}~Remark~\ref{r:role-of-b}); 
\item[iii)]
that the parameter $\gamma=\alpha - \tau c^2/b$ is a threshold of stability/instability: it must be positive, if the property of uniform stability is required.
\end{enumerate}
The critical role of $\gamma$ for a dissipative behaviour was recently pointed out 
also in Dell'Oro and Pata~\cite{delloro-pata_2016}, within the framework of viscoleasticity.
(We add that linear and true nonlinear variants of the MGT equation including an {\em additional} memory term have been the object of recent investigation; see \cite{las-jee_2017} and references therein.) 

\smallskip
Our interest lies in studying the regularity of the mapping
\begin{equation*} 
(u_0,u_1,u_2,g)\longmapsto u
\end{equation*}
that associates to initial and boundary data---taken in appropriate spaces---the corresponding solution $u=u(t,x)$ to the initial/boundary value problem (IBVP)
\eqref{e:mgt}-\eqref{e:IC}-\eqref{e:BC}.
(We note that the time and more often the space variable $x$ will generally not be esplicit,
unless when needed for the sake of clarity.)

As it will be shown in the paper, it will be the embedding of equation \eqref{e:mgt}
in a general class of integro-differential equations (depending on a parameter) 
to spark our method of proof for the regularity analysis of the associated initial/boundary value problems.
Indeed, the MGT equation is a special instance of the following wave equation with 
persistent memory, 
\begin{equation}\label{e:memory} 
u_{tt}-b \Delta u=-b\gamma \int_0^t N(t-s) \Delta u(s)\,ds + F(t)\xi\,,
\end{equation}
which displays an affine term depending on a suitable $\xi$, and that will be
supplemented with (initial and boundary) data
\begin{equation} \label{eq:dataDIe:memory}
u(0)=u_0\,,\ u_t(0)=u_1\,, \qquad \cT u=g\,.
\end{equation}
The assumptions on the real valued functions $N(t)$, $F(t)$ and on $\xi$ in \eqref{e:memory} 
are specified later; see Theorem~\ref{t:sample}.
As it will be apparent below, the parameter $\xi$ includes the component $u_2$ of
initial data $(u_0,u_1,u_2)$ for the MGT equation, while \eqref{e:memory}-\eqref{eq:dataDIe:memory} reduces to the MGT equation (with \eqref{e:IC}-\eqref{e:BC}) when
\begin{equation*}
N(t)=F(t)=e^{-\alpha t}\,,\qquad \xi=u_2-b\Delta u_0\,.
\end{equation*} 
 
\smallskip
The obtained regularity results will follow combining the (interior and trace) regularity
theory for wave equations with non-homogenous boundary data---the Neumann case being the most challenging (see \cite{las-trig_wave1}, and the optimal result of \cite{tataru_1998})---with the methods developed in~\cite{PandLIBRO} for equations with persistent memory.
In order to carry out a regularity analysis of the model equation with memory 
\eqref{e:memory} we shall use the trick of MacCamy \cite{maccamy_1977} and the theory of Volterra equations.

For equations with memory of the form \eqref{e:memory} the reader is referred, e.g., to \cite[Chapter~2]{PandLIBRO}; see also \cite[Chapter~5]{corduneanu}.
A novelty in the equation is brought about by the presence of the (vectorial) parameter $\xi$.
A classical reference on---and thorough treatment of---evolutionary integral equations is 
\cite{pruess_1993}.

\smallskip
It is important to emphasize at the very outset that the adopted perspective and approach
paves the way for establishing the (sharp) regularity of boundary traces for the MGT equation,  
as well as for the solutions to a rather general family of wave equations with memory, supplemented with Dirichlet or Neumann boundary conditions.
While being a topic of recognized current interest, the only result that appears available so far is the one obtained (via energy methods) in \cite{loreti-sforza_parma}, 
tailored for the case of Dirichlet boundary conditions.
Our alternative proof for the model equation with memory, depending on the parameter $\xi$
(and with the same BC) is given in Theorem~\ref{t:traces-memory}, which
brings about a boundary regularity result for the MGT equation, that is 
Corollary~\ref{c:traces-mgt}. 
(The study of the regularity of boundary traces for wave equations with memory in 
the case of Neumann boundary conditions is left to a separate, subsequent investigation.)


\subsection{Main results: synopsis} 
The outcome of the {\em interior} regularity analysis carried out in this paper is stated in 
Theorem~\ref{t:main_1}, pertaining to the general model equation with memory \eqref{e:memory},
and Theorem~\ref{t:main_2} for the MGT equation itself.
Beside being the former results instrumental in achieving the subsequent ones, they
are also of independent interest.
\\
Because the said results are presented by means of elaborate tables, aiming at rendering explicit the major achievements on the regularity of equations \eqref{e:memory} and 
\eqref{e:mgt}---the latter linked and complementing those in our key reference~\cite{kalt-etal_2011}---we highlight them in Theorem~\ref{t:sample} below.
Theorem~\ref{t:sample} includes as well a last statement on the regularity of {\em boundary} traces, an issue which is dealt with in Section~\ref{s:traces}; see Theorem~\ref{t:traces-memory} and Corollary~\ref{c:traces-mgt}.

\medskip
For the statement and understanding of all our findings, we need to introduce appropriate
functional spaces along with the related notation.
Let $A$ be the unbounded operator defined as follows:
\begin{equation} \label{definizOPERATORE-A}
A w:=(\Delta -I) w\,, \quad 
\cD(A) =\big\{w\in H^2(\Omega)\colon \cT w =0 \; 
\text{on $\Gamma$}\big\}\,;
\end{equation}
namely, $A$ is the (so called) {\em realization} of the differential operator $\Delta -I$ in 
$L^2(\Omega)$, with homogeneous boundary conditions (BC) defined by $\cT$, in the present work
of either Dirichlet or Neumann type; of course, the domain of $A$ depends on $\mathcal{T}$.
(We might take the realization of the laplacian in the case of Dirichlet BC; translating the
differential operator allows us to deal with both significant BC at once.)
We further note that 
$A$ is the infinitesimal generator of an exponentially stable analytic
semigroup and the fractional powers of $-A$ are well defined.
Thus, we are allowed to introduce the functional spaces $X_s$ definied as follows:
\begin{equation}\label{e:x_r} 
X_s=
\begin{cases} 
\cD((-A)^{s/2}) & \text{if $s \ge 0$}
\\[1mm]
[\cD((-A)^{s/2})]' & \text{if $s < 0$}\,,
\end{cases}
\end{equation}
endowed with the graph norm if $s\ge 0$, while the norm of a dual space is needed otherwise.
\\
Then, it is well known that $A\colon X_s \rightarrow  X_{s-2}$ is continuous, surjective and boundedly invertible.

\smallskip
The next theorem collects results obtained in Sections~\ref{s:main_1}, \ref{sect:RegulaMGT},
~\ref{s:traces}. 
\begin{theorem}[A compendium of main results] \label{t:sample}
The following assertions hold:
\begin{itemize}
\item[i)] \label{item1THTheorem:sample} 
{\rm 
(Interior regularity for the equation with memory~\eqref{e:memory}) with homogeneous BC).
}
Assume $N(t)\in H^2(0,T)$ and $F(t)\in L^2(0,T)$ for every $T>0$. 
Let $g\equiv 0$. 
If $u_0\in X_0$ and $u_1,\xi\in X_{-1}$,
then the solution $u$ to the IBVP problem \eqref{e:memory}-\eqref{eq:dataDIe:memory}
satisfies 
\begin{equation*}
u\in C([0,T];X_0)\cap C^1([0,T];X_{-1})\cap L^2(0,T;X_{-2})\,.
\end{equation*}

\item[ii)] \label{item2THTheorem:sample}
{\rm 
(Interior regularity for the MGT equation \eqref{e:mgt} with homogeneous BC).
}
If $g\equiv 0$ and $(u_0,u_1,u_2)\in X_1\times X_1\times X_0$,
then the solution $u$ to the IBVP problem \eqref{e:mgt}-\eqref{e:IC}-\eqref{e:BC}
satisfies 
\begin{equation} \label{e:boundary-tointerior}
u\in C([0,T];X_1)\cap C^1([0,T];X_1)\cap C^2([0,T];X_0)
\end{equation}
and the map $(u_0,u_1,u_2)\longmapsto u(t,x)$ is continuous in the specified spaces.

\item[iii)] \label{item3THTheorem:sample}
{\rm 
(Boundary-to-interior regularity for equations \eqref{e:memory} and \eqref{e:mgt},
with trivial initial data).
}
Assume $u_0=u_1=\xi=0$ ($u_0=u_1=u_2=0$, respectively), and $g\in L^2(0,T;L^2(\Gamma))$.
Then there exists $\alpha_0$ such that for the solutions $u$ to the IBVP problem 
\eqref{e:memory}-\eqref{eq:dataDIe:memory} (\eqref{e:mgt}-\eqref{e:IC}--\eqref{e:BC},
respectively) satisfy
\begin{equation} \label{e:boundary-to-interior}
u\in C([0,T];X_{\alpha_0})\cap C^1([0,T];X_{\alpha_0-1}\cap  L^2(0,T;X_{\alpha_0-2})\,;
\end{equation}
the value of $\alpha_0$ depends on the boundary operator $\mathcal{T}$ (and partly
on $\Omega$) and are specified in \eqref{e:alfazero} below.
\item[iv)] 
{\rm 
(Regularity of boundary traces for the MGT equation \eqref{e:mgt}).
}
Let $u=u(t,x)$ be a solution to the MGT equation \eqref{e:mgt} corresponding to
initial data $(u_0,u_1,u_2)$ and homogeneous boundary data.
Assume $(u_0,u_1,u_2)\in H^1_0(\Omega)\times L^2(\Omega)\times H^{-1}(\Omega)$,
along with the compatibility condition
\begin{equation*} 
u_2-\Delta u_0\in L^2(\Omega)\,.
\end{equation*}
Then, for every $T>0$ there exists $M=M_T$ such that
\begin{equation*}
\begin{split}
& \int_0^T\!\!\!\int_{\partial\Omega} \Big|\frac{\partial}{\partial\nu} u(x,t)\Big|^2 
d\sigma\,d t 
\le M\, \Big( \|u_0\|_{H^1_0(\Omega)}+\|u_1|_{L^2(\Omega)}^2 +
\\[1mm]
& \myspace \myspace \qquad
+\|u_2-\Delta u_0\|^2_{L^2(\Omega)}\Big)\,. 
\end{split}
\end{equation*}

\end{itemize}

\end{theorem}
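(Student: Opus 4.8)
The plan is to handle all four assertions through the single reduction announced in the Introduction. First I would recast the MGT equation \eqref{e:mgt} as the wave equation with memory \eqref{e:memory}: multiplying by $e^{\alpha t}$, integrating once in time and integrating the term $b\Delta u_t$ by parts turns \eqref{e:mgt} into $u_{tt}-b\Delta u=-b\gamma\int_0^t e^{-\alpha(t-s)}\Delta u(s)\,ds+e^{-\alpha t}(u_2-b\Delta u_0)$, i.e.\ \eqref{e:memory} with $N=F=e^{-\alpha t}$ and $\xi=u_2-b\Delta u_0$. Writing $\Delta=A+I$ and letting $\mathcal{C}(t),\mathcal{S}(t)$ be the cosine and sine operators generated by $b\Delta$ on the scale $\{X_s\}$ — so that $\mathcal{C}(t)\colon X_s\to X_s$ and $\mathcal{S}(t)\colon X_s\to X_{s+1}$ are bounded uniformly on $[0,T]$, with $\mathcal{C}=\mathcal{S}'$ and $\mathcal{C}'=b\Delta\,\mathcal{S}$ — each statement will follow from the variation-of-parameters representation together with the theory of Volterra equations. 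For assertion i) I would rewrite \eqref{e:memory} as $u_{tt}=b\Delta(u-\gamma\,N*u)+F\xi$ and apply MacCamy's trick: the resolvent kernel $R$ associated with the scalar kernel $\gamma N$, which again lies in $H^2(0,T)$ because $N\in H^2(0,T)$, inverts the convolution $\delta-\gamma N*$ and expresses $b\Delta u$ in terms of $u_{tt}-F\xi$ and a scalar convolution; two integrations by parts then transfer the history off the Laplacian and onto $u_t$ and $u$. The resulting equation is a genuine wave equation with a lower-order memory perturbation, and the Duhamel formula $u(t)=\mathcal{C}(t)u_0+\mathcal{S}(t)u_1+\int_0^t\mathcal{S}(t-r)h(r)\,dr$ closes by a contraction in $C([0,T];X_0)\cap C^1([0,T];X_{-1})$: with $u_0\in X_0$, $u_1,\xi\in X_{-1}$ and $F\in L^2(0,T)$ one has $F\xi\in L^2(0,T;X_{-1})$, and $\mathcal{S}$ supplies exactly the one unit of space regularity needed; the third membership in $L^2(0,T;X_{-2})$ then follows from the first two together with the equation.

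For assertion ii) I would feed the reduction with $(u_0,u_1,u_2)\in X_1\times X_1\times X_0$, so that $\xi=u_2-b\Delta u_0$ now lies only in $X_{-1}$. The crux — and the step I expect to be the main obstacle — is to recover nonetheless the energy-space regularity $u,u_t\in C([0,T];X_1)$, i.e.\ the announced gain of one space-unit for $u_t$; this is false for a generic wave with memory and rests entirely on the smoothness of the kernels $N=F=e^{-\alpha t}$, which makes integration by parts in time admissible (note $F'=-\alpha F$). Three effects are at work. The $u_1$-channel is immediate, since $\tfrac{d}{dt}\mathcal{S}(t)u_1=\mathcal{C}(t)u_1$ and $\mathcal{C}\colon X_1\to X_1$. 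In the $u_2$-channel, the $t$-derivative of $\int_0^t\mathcal{S}(t-r)F(r)u_2\,dr$ equals $\int_0^t\mathcal{C}(t-r)F(r)u_2\,dr$, which a priori sits only in $X_0$; integrating by parts and using $\mathcal{C}=\mathcal{S}'$ trades $\mathcal{C}$ for $\mathcal{S}\colon X_0\to X_1$ and upgrades it to $X_1$. Finally, the rough part $-b\Delta u_0$ of $\xi$ is exactly what compensates the derivative loss of the free term $\mathcal{C}(t)u_0$, whose $t$-derivative $\mathcal{C}'(t)u_0=b\Delta\,\mathcal{S}(t)u_0$ falls into $X_0$: one integration by parts in $\int_0^t\mathcal{S}(t-r)F(r)(-b\Delta u_0)\,dr$ cancels this singular contribution and leaves only bounded operators acting on $u_0\in X_1$. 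Once $u,u_t\in C([0,T];X_1)$ is secured, $u_{tt}\in C([0,T];X_0)$ is read off the equation, and continuous dependence is inherited from the uniform bounds on $\mathcal{C}$, $\mathcal{S}$ and on the resolvent $R$.

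For assertion iii) (trivial initial data, $g\in L^2(0,T;L^2(\Gamma))$) I would first absorb the boundary inhomogeneity through the Green map $g\mapsto Dg$ solving $(\Delta-I)Dg=0$, $\cT Dg=g$, so that $u-Dg$ satisfies homogeneous boundary conditions at the price of an interior source fed into the Volterra machinery. The exponent $\alpha_0$ of \eqref{e:alfazero} is then exactly the sharp boundary-to-interior exponent of the underlying wave equation, namely $\alpha_0=0$ in the Dirichlet case and a positive, in general fractional and geometry-dependent, value in the Neumann case (the delicate output of the analysis of Lasiecka--Triggiani and Tataru). Fixing this wave level, I would split $u=u^{\mathrm w}+u^{\mathrm m}$, with $u^{\mathrm w}$ the pure-wave boundary response and $u^{\mathrm m}$ the homogeneous-BC solution driven by the memory term along $u$, and close the loop by the same Volterra contraction as in i), run now at the level $C([0,T];X_{\alpha_0})\cap C^1([0,T];X_{\alpha_0-1})$. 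The delicate point is the Neumann case, where the wave exponent is borderline; here the MacCamy reduction is what saves the day, since it renders the memory perturbation genuinely lower order, so that it maps into strictly better spaces and cannot erode $\alpha_0$.

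For assertion iv) (the Neumann traces of the Dirichlet problem for \eqref{e:mgt}) I would again pass to \eqref{e:memory}, with $(u_0,u_1)\in H^1_0(\Omega)\times L^2(\Omega)=X_1\times X_0$ at the wave-energy level and the compatibility $u_2-\Delta u_0\in L^2(\Omega)$ ensuring that the affine datum enters \eqref{e:memory} at the $L^2$-level demanded by the trace theory. The backbone is the classical hidden-regularity estimate for the wave equation with homogeneous Dirichlet data: for $(w_0,w_1)\in X_1\times X_0$ one has $\partial_\nu w\in L^2((0,T)\times\Gamma)$ with $\|\partial_\nu w\|_{L^2}\le C(\|w_0\|_{X_1}+\|w_1\|_{X_0})$, proved by the flux (multiplier) identity. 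I would establish the corresponding bound for the model equation with memory — this is the content of Theorem~\ref{t:traces-memory} — by running the same multiplier identity on the MacCamy-transformed equation and absorbing the convolution contributions through the Volterra resolvent and Young's inequality, so that the memory adds only a term controlled by the interior norms already obtained in ii). Specializing $N=F=e^{-\alpha t}$ and $\xi=u_2-b\Delta u_0$ then yields Corollary~\ref{c:traces-mgt}, that is the stated estimate, with $M=M_T$ collecting the multiplier constant, the trace constant of the lifting and the resolvent bound. As in iii), the main difficulty is to show that the boundary flux generated by the history integral remains square-integrable in space and time, which once more rests on the gain provided by MacCamy's trick.
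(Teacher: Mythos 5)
Your architecture coincides with the paper's for the interior results: MacCamy's trick with the scalar resolvent kernel ($R_0\in H^2(0,T)$ since $N\in H^2(0,T)$), the cosine-operator Duhamel formula, and the resulting operator-valued Volterra equation \eqref{eq:represent} solved by iteration (your contraction is the paper's Neumann series plus Lemma~\ref{lemmaVOLTE}). In particular, for assertion ii) you identified exactly the paper's mechanism: the term $\int_0^t R_-(\sqrt{b}(t-s))h_2(s)(-b\Delta u_0)\,ds$, integrated by parts, produces $-R_+(\sqrt{b}\,t)h_2(0)u_0$ with $h_2(0)=F(0)=1$, which cancels the propagating free term $R_+(\sqrt{b}\,t)u_0$ — this is formula \eqref{e:h} in the proof of Theorem~\ref{t:main_2}, and it is the whole source of the MGT gain. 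One step, however, is wrong as stated: your closing claim in ii) that ``$u_{tt}\in C([0,T];X_0)$ is read off the equation.'' It is not: with $u\in C([0,T];X_1)$ each right-hand term of \eqref{e:mgt} — $b\Delta u$, the memory integral, and $F(t)\xi$ with $\xi=u_2-b\Delta u_0\in X_{-1}$ — lies only in $X_{-1}$, so the equation alone yields $u_{tt}\in C([0,T];X_{-1})$; the membership in $X_0$ reflects a cancellation \emph{among} these three terms. The paper obtains it from the differentiated Volterra equation \eqref{eq:deriSEconda}, showing that its affine term $H_2$ lies in $C([0,T];X_{\lambda-1})$ after a \emph{second} integration by parts (this is where $h_2\in H^2$, not merely $H^1$, is used). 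Your channel-by-channel representation delivers the same conclusion if you differentiate it once more, so the gap is repairable with tools you already deployed — but the shortcut you wrote in its place fails.

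On assertion iv) your route genuinely diverges from the paper's. You propose to rerun the flux/multiplier identity on the MacCamy-transformed equation, absorbing the convolution terms by the resolvent and Young's inequality; that would essentially reprove the Loreti--Sforza result (extended to $\xi\neq 0$) and is viable, since after MacCamy the memory is lower order. The paper instead avoids multipliers altogether: it writes the normal derivative as $-G^*Av$ via the identity \eqref{e:basic-trace-d}, applies the wave trace estimate of Theorem~\ref{t:traces-waves} as a black box to each summand of the cosine-operator representation of $v$ (first for $\xi\in\cD(A)$, then extending by density), handles the convolution $\int_0^t h_2(t-s)X(s)\,ds$ by Young's inequality, and controls the memory feedback through the interior bound $v\in C([0,T];H^1_0(\Omega))$ from the earlier regularity theory. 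What your approach buys is self-containedness at the PDE level; what the paper's buys is that the same operator-theoretic scheme transfers in principle to the Neumann case, where no multiplier proof of hidden regularity is available — which is precisely the methodological point the authors emphasize.
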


\begin{remarks}
\begin{rm}
We see from the statements in i) and ii), respectively, that while the equation with memory
\eqref{e:memory} displays a somewhat expected regularity, namely, the same as most PDE models for wave propagation, the interior regularity of solutions to the MGT equation \eqref{e:mgt} under homogeneous boundary conditions improves. 
\\
Instead, the regularity result in iii)---that pertains to the case of trivial initial data ($u_0=u_1=u_2=0$) and non-homogeneous boundary data ($g\ne 0$)---is not improved by special
choices of the kernel $N(t)$, such as $N(t)=e^{-\alpha t}$.

It is worth mentioning that our analysis does not disclose that the dynamics of the MGT
equation \eqref{e:mgt} is governed by a {\em group}. 
Following the studies on well-poseness performed in \cite{kalt-etal_2011} and \cite{marchand-etal_2012}, the present study focuses on the regularity analysis of a general
class of PDE systems which are governed by {\em semigroups}---not necessarily groups---and
whose solutions generally display a lower regularity than the ones of equation \eqref{e:mgt}.
The higher interior regularity for the MGT equation is obtained when we particularize
the formulas, and exploiting the smoothness of the coefficients.
\end{rm}
\end{remarks}

\smallskip
We note that the values of $\alpha_0$ which occurr in \eqref{e:boundary-to-interior}---and which correspond to appropriate Sobolev exponents---are the ones
established in the case of (linear) hyperbolic equations with $L^2(\Sigma)$ boundary data 
(of either Dirichlet or Neumann type).
We record explicitly for the IBVP 
\begin{equation}\label{e:ibvp-wave}
\begin{cases}
u_{tt}=\Delta u - u+f & \text{in $(0,T)\times \Omega$}
\\[1mm]
u(0,\cdot)=u_0\,, \; u_t(0,\cdot)=u_1  & \text{in $\Omega$}
\\[1mm]
\cT u=g  & \text{on $(0,T)\times \Gamma$}
\end{cases}\,.
\end{equation}
a statement which embodies a complex of successive achivements; see the cited references.
\begin{theorem}[\cite{las-lions-trig}, \cite{las-trig_wave1}, \cite{tataru_1998}]  
\label{t:tataru}
Assume that $u_0, u_1=0$, $f= 0$, and $g\in L^2(\Sigma)$.
Then, the unique solution to the initial/boundary value problem \eqref{e:ibvp-wave}
satisfies
\begin{equation*}
(u,u_t)\in C([0,T];H^{\alpha_0}(\Omega) \times H^{\alpha_0-1}(\Omega))\,,
\end{equation*}
with 
\begin{equation}\label{e:alfazero}
\alpha_0 =
\begin{cases}
0 & \text{if $\cT$ is the Dirichlet trace operator}
\\[1mm]
\frac23 & \text{if $\cT$ is the Neumann trace operator and $\Omega$ is a 
smooth domain}
\\[1mm]
\frac34 & \text{if $\cT$ is the Neumann trace operator and $\Omega$ is a parallelepiped.}
\end{cases}
\end{equation} 
\end{theorem}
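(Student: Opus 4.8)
The plan is to treat the two boundary operators separately, since the Dirichlet case is amenable to a clean duality argument while the Neumann case is genuinely harder and requires microlocal analysis. For the Dirichlet case ($\alpha_0=0$) I would define the solution by transposition. First I would fix the backward-in-time \emph{adjoint} (homogeneous Dirichlet) problem
\begin{equation*}
v_{tt}=\Delta v - v + \varphi,\qquad v|_\Sigma=0,\qquad v(T)=v_t(T)=0,
\end{equation*}
for a smooth source $\varphi$, and record the classical energy estimate for it. The crucial input is the \emph{hidden (sharp) trace regularity} of this problem, namely that the normal derivative belongs to $L^2(\Sigma)$ with
\begin{equation*}
\|\partial_\nu v\|_{L^2(\Sigma)}\le C\,\|\varphi\|_{L^1(0,T;L^2(\Omega))},
\end{equation*}
a bound I would establish by the Rellich--Pohozaev multiplier $h\cdot\nabla v$, with $h$ a smooth vector field equal to the outward normal $\nu$ on $\Gamma$.

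Multiplying \eqref{e:ibvp-wave} (with $u$ smooth, $u_0=u_1=0$, $f=0$) by $v$ and integrating by parts twice over $(0,T)\times\Omega$, all the volume contributions cancel because $u$ solves the homogeneous equation and the time boundary terms vanish by the initial/terminal conditions, leaving only $-\int_\Sigma g\,\partial_\nu v\,d\sigma\,dt$. Thus $\varphi\longmapsto -\int_\Sigma g\,\partial_\nu v$ is a bounded linear functional on the source space, and the Riesz representation \emph{defines} the solution $u$; a standard time-localized refinement then yields $u\in C([0,T];L^2(\Omega))$ and $u_t\in C([0,T];H^{-1}(\Omega))$, with continuous dependence on $g\in L^2(\Sigma)$. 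This is the content of \cite{las-lions-trig}.

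The Neumann case is the hard part, and no duality shortcut works: the Neumann problem for the wave operator fails the uniform Lopatinski--Sakamoto condition precisely on the set of \emph{glancing} (tangential) rays, so the anticipated gain of regularity is lost there. The strategy I would follow, after \cite{las-trig_wave1}, is microlocal: using a tangential pseudodifferential calculus near $\Gamma$, decompose the boundary cotangent bundle into the elliptic, hyperbolic, and glancing regions. In the elliptic region, ellipticity of the symbol delivers the trace with no loss; in the hyperbolic region the rays are transversal and energy/multiplier estimates propagate regularity up to the boundary; the entire defect concentrates in a conic neighborhood of the glancing set. Interpolating the good estimates against the trivial $L^2$ bound across this neighborhood produces the exponent $\alpha_0=2/3$, and the sharp value is then obtained by Tataru's refined (para)differential calculus \cite{tataru_1998}, which resolves the glancing region with optimal gain.

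Finally, for the parallelepiped I would abandon the pseudodifferential machinery in favor of explicit separation of variables: since $\Gamma$ is flat and uncurved, one may expand $u$ in a Fourier series in the tangential variables and reduce to a family of one-dimensional problems indexed by the tangential frequencies. The absence of curvature lets one estimate the glancing contribution by summing explicit oscillatory (and partly number-theoretic) bounds over these frequencies, and this quantitative improvement raises the exponent to $\alpha_0=3/4$. I expect the glancing analysis in the Neumann case to be the genuine obstacle throughout: it is the only place where the loss of derivatives is unavoidable, and pinning down its optimal size is exactly what distinguishes the three cited contributions.
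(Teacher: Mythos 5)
This theorem is not proved in the paper at all: it is quoted verbatim as a known result from \cite{las-lions-trig}, \cite{las-trig_wave1} and \cite{tataru_1998}, so there is no internal proof to compare your attempt against. What can be said is that your sketch is a faithful reconstruction of how those references actually proceed. The Dirichlet half is essentially complete in outline: transposition against the backward adjoint problem, with the hidden regularity estimate $\|\partial_\nu v\|_{L^2(\Sigma)}\le C\|\varphi\|_{L^1(0,T;L^2(\Omega))}$ obtained from the Rellich--Pohozaev multiplier $h\cdot\nabla v$, $h|_\Gamma=\nu$, is exactly the Lasiecka--Lions--Triggiani argument, and your sign bookkeeping in the Green identity (only $-\int_\Sigma g\,\partial_\nu v$ survives) is correct. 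Your diagnosis of the Neumann case --- failure of the uniform Lopatinski condition exactly at glancing rays, microlocal splitting into elliptic/hyperbolic/glancing regions with the entire loss concentrated at glancing --- also matches the structure of \cite{las-trig_wave1}, and the parallelepiped exponent $3/4$ is indeed obtained there by explicit eigenfunction (separation of variables) computations rather than pseudodifferential methods.

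Two caveats keep this from being a proof rather than an annotated roadmap. First, the decisive analytic content --- the estimate in the glancing region that yields precisely $2/3$, Tataru's sharp analysis, and the summation over tangential frequencies giving $3/4$ on the parallelepiped --- is named but not executed; for this particular statement that is the whole difficulty, and it is also why the paper itself treats the theorem as a citation. Second, your phrase ``interpolating the good estimates against the trivial $L^2$ bound'' undersells the mechanism: a naive interpolation across a glancing neighborhood would not produce the exponent $2/3$, which comes from the Airy-type scaling of the solution near the glancing set (a loss of $1/3$ derivative); the correct takeaway is that the exponent is extracted from this anisotropic scaling, not from abstract interpolation. With those qualifications, nothing in your outline is wrong, and it is consistent with the literature the paper relies on.
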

For a chronological overview with historical and technical remarks see, e.g.,
\cite[Notes on Chapter~8, p.~761]{las-trig-book}. 

We finally point out on the regularity of wave equations the recent progress of 
\cite{triggiani_2016}, dealing with the case of boundary data $g$ that are {\em not}
`smooth in space', e.g., $g\in L^2(0,T;H^{-1/2}(\Gamma))$.
In view of the approach taken in the present work, it is clear that the results obtained 
therein could be utilized as well in order to attain regularity results for equations
with memory and for the MGT equation under boundary data that are less
regular (than square integrable) in space.


\subsection{Orientation}
The plan of the paper is briefly outlined below.
For the reader's convenience and since these tools will be utilized throughout,
in Section~\ref{s:preliminaries-on-wave} we provide a minimal background and references on the
approach to linear wave equations 
via cosine operator theory.

In Section~\ref{sec:memory} we perform an analysis of the equation with
memory \eqref{e:memory} that encompasses the MGT equation.
An equivalent equation---in fact easier, since the convolution term therein does not involve
differential operators at all---is derived, which in turn results
in a Volterra equation of the second kind; see Proposition~\ref{p:volterra}.
This step will play a crucial role in the proof of our first regularity result, 
that is Theorem~\ref{t:main_1}, concerning the model equation with memory \eqref{e:memory}.
Section~\ref{s:main_1} is then almost entirely devoted to the proof of Theorem~\ref{t:main_1}.

In Section~\ref{sect:RegulaMGT} we return to the third order MGT equation and show how the (interior) regularity results specifically pertaining to the MGT equation, stated in 
Theorem~\ref{t:main_2}, follow as a consequence of Theorem~\ref{t:main_1}.
Finally, Section~\ref{s:traces} is devoted to the regularity of boundary traces;
see Theorem~\ref{t:traces-memory} and Corollary~\ref{c:traces-mgt}.
\\
A discussion and explanation of the introduced definition of solutions to the third order
(in time) equation under investigation is postponed to Appendix~\ref{a:def-solutions}.

 
\section{Preliminaries on wave equations} \label{s:preliminaries-on-wave}
Consider the initial/boundary value problem for a linear wave equation 
\eqref{e:ibvp-wave}. 
Since the methods of proof employed in the present work rely in a crucial way on the representation of solutions to wave equations by means of cosine operators, few lines
on this approach follow.
The reader is referred to \cite{belleni} and \cite{las-trig-cos}, were a first use of cosine operators is found in order to study equations with persistent memory and in the context of boundary control theory, respectively; see also the former contribution of \cite{sova_1966}.
We adopt here the notation of \cite{belleni} and \cite{fattorini}.

We shall use the operator $A$ in~\eqref{definizOPERATORE-A}, which is the realization of the translation $\Delta -I$ of the 
Laplacian in $L^2(\Omega)$, with suitable homogeneous boundary conditions, according to
a (boundary) operator $\cT$.
(In the Dirichlet case $A$ might be simply the realization of the Laplacian.)
As noted already, $A$ is boundely invertibile, i.e. $A^{-1}$ exists and it is bounded, 
in fact compact (even if $\cT$ represents the normal derivative on $\Gamma$).
It generates an exponentially stable analytic
semigroup and the fractional powers of $-A$ are well defined and we shall use the spaces $X_s$ in \eqref{e:x_r}   ($X_s$ has the graph
norm if $s\ge 0$, and the norm as a dual space otherwise).
We recall once more that $A$: $X_s \rightarrow  X_{s -2}$ is continuous, surjective  and boundedly invertible.

Next, we introduce the Green maps $G\in \cL(L^2(\Gamma),L^2(\Omega))$ defined
as follows:
\begin{equation}\label{e:green-map}
G\colon L^2(\Gamma)\ni \varphi\longmapsto G\varphi=:\psi \;
\Longleftrightarrow \;
\begin{cases}
\Delta \psi =\psi & \textrm{on $\Omega$}
\\[1mm]
\cT \psi=\varphi & \textrm{on $\Gamma$}\,.
\end{cases}\,;
\end{equation}
By elliptic theory, it is known that there exists an appropriate $s >0 $ such that 
$\text{im}\,G\subset X_s$ so that $A G\subset X_{s -2}$.
For instance, in the case of Dirichlet boundary conditions one has 
$\text{im}\,G= H^{1/2}(\Omega) \subset X_s$, with inclusion that holds true for any 
$s=1/2 -\sigma$, $0<\sigma<\frac{1}{2}$. 
 
Thus, it is known that the solution to the IBVP \eqref{e:ibvp-wave} is given by 
\begin{equation}\label{e:waves-explicit}
\begin{split}
u(t)& =R_+(t)u_0+\cA^{-1} R_-(t)u_1-\cA\int_0^t R_-(t-s)Gg(s)\,ds \,+ 
\\[1mm]
& \myspace +\cA^{-1} \int_0^t R_-(t-s)f(s)\,ds 
\end{split}
\end{equation}
where the operator $\cA$, and the families of operators $R_+(\cdot)$, $R_{-}(\cdot)$
are defined as follows:
\begin{equation}\label{eq:defiOperRpm}
\cA=i(-A)^{1/2}\,,\qquad 
R_+(t)=\frac{e^{\cA t}+e^{-\cA t}}{2}\,,
\qquad R_-(t)=\frac{e^{\cA t}-e^{-\cA t}}{2}\,,
\end{equation}
$R_+(t)$ being the strongly continuous {\em cosine} operator generated by $A$ in
$L^2(\Omega)$; see \cite{sova_1966}, \cite{fattorini}, \cite[Vol.~II]{las-trig-book}.

\begin{remark}
\begin{rm}
The previous definitions make sense because $\cA$ is the infinitesimal generator of a 
$C_0$-{\em group} of operators;
in particular, we have as well 
\begin{equation*}
X_s=\cD((i\cA)^s)=\cD(\cA^s) \qquad\qquad \text{if $s\ge 0$,}
\end{equation*}
and $\cA$ is bounded and boundedly invertible from $X_s$ to $X_{s-1}$ for every 
$s$.
\end{rm}
\end{remark} 

Computing the derivatives of \eqref{e:waves-explicit} we obtain the following equalities,
valid in $H^{-1}(\Omega)$ and $H^{-2}(\Omega)$, respectively:
\begin{equation}\label{e:waves-explicit-prime}
\begin{split}
u_t(t)& =\cA R_-(t)u_0+R_+(t)u_1-A\int_0^t R_+(t-s) Gg(s)\,ds\, +
\\[1mm]
& \myspace + \int_0^t R_+(t-s)f(s)\,ds\,,
\end{split}
\end{equation}
as well as
\begin{equation} \label{e:waves-explicit-second}
\begin{split}
u_{tt}(t)&=A R_+(t)u_0+\cA R_-(t)u_1  -AG g(t) -A\Big(\cA \int_0^t R_-(t-s) Gg(s)\,ds\Big)+
\\[1mm]
&  + f(t) +\cA\int_0^t R_-(t-s)f(s)\,ds =
\\[1mm]
&=Au(t)-AGg(t)+f(t)\,.
\end{split}
\end{equation}

\begin{remark}
\begin{rm}
If $f(\cdot)$ is of class $C^1([0,T])$ then it is possible to integrate by parts, 
like in
\begin{equation*}
\cA^{-1} \int_0^t R_-(t-s)f(s)\,ds=-A^{-1}\left [
f(t)-R_+(t)f(0)-\int_0^t R_+(t-s) f(s) d s
\right ]
\end{equation*}
which brings about a gain of one unity in space regularity.
The integration by parts is rigorously justified in~\cite[Lemma~5]{PandAMO}.
\end{rm}
\end{remark}

The explicit formula \eqref{e:waves-explicit}, along with \eqref{e:waves-explicit-prime}
and \eqref{e:waves-explicit-second} are among the keys for the following regularity
result.
The statement in iii) is by far the most challenging, as its proof is based on 
pseudo-differential methods and microlocal analysis.
\begin{theorem}\label{teo:propertyONDE}
Let $T>0$ be given, and $s\in \mathbb{R}$. 
The following statements hold true for the solutions to the initial/boundary value
problem~(\ref{e:ibvp-wave}).
\begin{enumerate}
\item[i)]
\label{teo:propertyONDE-1} 
Assume $g=0$, $f=0$. 
Then $(u_0,u_1)\longmapsto u(t)$ is continuous from $X_s\times X_{s-1}$ 
into $C([0,T],X_s)\cap C^1([0,T],X_{s-1})\cap C^2([0,T],X_{s-2})$.
\item[ii)]
\label{teo:propertyONDE-2} 
Assume $u_0=0$, $u_1=0$, $g=0$.
Then the map $f \longmapsto u(t)$ is continuous from $L^2(0,T;X_s)$ into 
$C([0,T],X_{s+1})\cap C^1([0,T),X_s)$ 
while $u_{tt}(t)-f(t)\in C([0,T],X_{s+1})$.  
\item[iii)]\label{teo:propertyONDE-3} 
Assume $u_0=0$, $u_1=0$, $f=0$. 
Then, there exists $\alpha_0\ge 0$---depending on $\cT$ and possibly on the geometry of 
$\Omega$---such that for every $g\in L^2((0,T)\times \Gamma)$ we have 
$u\in  C([0,T],X_{\alpha_0})\cap C^1([0,T];X_{\alpha_0-1})\cap C^2([0,T],X_{\alpha_0-2})$.
The mapping $g\longmapsto u$ is continuous in the indicated spaces. 
\end{enumerate}

\end{theorem}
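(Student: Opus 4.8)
The plan is to read all three statements directly off the closed-form representation of the solution already recorded in \eqref{e:waves-explicit}, \eqref{e:waves-explicit-prime} and \eqref{e:waves-explicit-second}, combined with the mapping properties of the operators on the scale $\{X_s\}$. The only genuinely hard ingredient is the sharp exponent $\alpha_0$ entering part iii), which is a boundary/trace regularity phenomenon lying beyond the reach of the cosine-operator calculus; for that single point I invoke Theorem~\ref{t:tataru}. The two structural facts I use repeatedly are that $\cA$ generates a $C_0$-group, so that $R_+(t)$ and $R_-(t)$ in \eqref{eq:defiOperRpm} are bounded on every $X_s$ uniformly for $t\in[0,T]$ and strongly continuous in $t$, and that $\cA\colon X_s\to X_{s-1}$ and $A=\cA^2\colon X_s\to X_{s-2}$ are isomorphisms of the scale.

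For part i) I set $f=g=0$ and keep only the first two summands in each of \eqref{e:waves-explicit}, \eqref{e:waves-explicit-prime}, \eqref{e:waves-explicit-second}. Taking $u_0\in X_s$ and $u_1\in X_{s-1}$, the term $R_+(t)u_0$ stays in $X_s$ while $\cA^{-1}R_-(t)u_1$ is lifted from $X_{s-1}$ into $X_s$, so $u\in C([0,T],X_s)$; differentiating once, $\cA R_-(t)u_0$ and $R_+(t)u_1$ both land in $X_{s-1}$, giving $u_t\in C([0,T],X_{s-1})$; differentiating again, $AR_+(t)u_0$ and $\cA R_-(t)u_1$ both land in $X_{s-2}$, giving $u_{tt}\in C([0,T],X_{s-2})$. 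Strong continuity in $t$ comes from the group, and since every operator bound above is uniform on $[0,T]$, the resulting (linear) map $(u_0,u_1)\mapsto u$ is continuous between the asserted spaces.

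Part ii) is handled the same way with $u_0=u_1=g=0$, retaining only the Duhamel terms. For $f\in L^2(0,T;X_s)$ the integral $\int_0^t R_-(t-s)f(s)\,ds$ lies in $C([0,T],X_s)$, and the prefactor $\cA^{-1}$ in \eqref{e:waves-explicit} supplies the one-unit gain that places $u$ in $C([0,T],X_{s+1})$; from \eqref{e:waves-explicit-prime} the term $\int_0^t R_+(t-s)f(s)\,ds$ gives $u_t\in C([0,T],X_s)$. The statement about the second derivative I read off the algebraic identity $u_{tt}=Au+f$ contained in \eqref{e:waves-explicit-second} (here $g=0$): subtracting $f$ leaves $u_{tt}-f=Au=\cA\int_0^t R_-(t-s)f(s)\,ds$, whose regularity is thus inherited from that of the integral through the bounded map $\cA$. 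Continuity of $f\mapsto u$ again follows from the uniform operator bounds.

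The substance of the theorem is part iii), where $u_0=u_1=f=0$ and $u(t)=-\cA\int_0^t R_-(t-s)Gg(s)\,ds$. The naive route---bounding $Gg(s)$ by elliptic regularity of the Green map, $\text{im}\,G\subset X_{s_0}$ for some $s_0>0$, and using the group bound on $R_-$---would only yield $u\in C([0,T],X_{s_0-1})$, an exponent strictly below the optimal $\alpha_0$; the gap is exactly the hidden (trace) regularity of wave equations with $L^2(\Sigma)$ boundary data, which is inaccessible to the cosine/semigroup calculus and constitutes the real obstacle. This is precisely the content I import from Theorem~\ref{t:tataru}: it furnishes $(u,u_t)\in C([0,T],H^{\alpha_0}(\Omega)\times H^{\alpha_0-1}(\Omega))$ with $\alpha_0$ as in \eqref{e:alfazero}. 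Since $X_s=\cD((-A)^{s/2})$ coincides with $H^s(\Omega)$ throughout the low range of exponents at play here (no boundary constraint is active for such $s$), this reads as $u\in C([0,T],X_{\alpha_0})\cap C^1([0,T],X_{\alpha_0-1})$; the second-derivative level then follows from the identity $u_{tt}=Au-AGg(t)$ of \eqref{e:waves-explicit-second}, the first summand lying in $X_{\alpha_0-2}$ by the previous step and the boundary correction $AGg(\cdot)$ being controlled through the mapping properties of $AG$. Continuity of $g\mapsto u$ is part of the cited statement. The main difficulty is therefore entirely concentrated in the sharp value of $\alpha_0$, which rests on the microlocal/pseudodifferential analysis underlying Theorem~\ref{t:tataru} and which I do not reprove.
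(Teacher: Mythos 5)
Your proposal matches the paper's treatment almost verbatim: the paper likewise gives no self-contained proof, presenting i) and ii) as direct consequences of the cosine-operator representations \eqref{e:waves-explicit}--\eqref{e:waves-explicit-second} with the mapping properties of $\cA$ and $R_\pm$ on the scale $X_s$, and delegating the sharp exponent $\alpha_0$ in iii) entirely to the microlocal trace-regularity literature via Theorem~\ref{t:tataru}, exactly as you do (including your correct observation that the naive estimate through $\mathrm{im}\,G\subset X_{s_0}$ falls short of \eqref{e:alfazero}). Two small wrinkles worth noting: your identity $u_{tt}-f=\cA\int_0^t R_-(t-s)f(s)\,ds$ actually places $u_{tt}-f$ in $C([0,T],X_{s-1})$, so the statement's $X_{s+1}$ in ii) is evidently a misprint for $X_{s-1}$; and in iii) the correction $AGg(\cdot)$ is only $L^2$ in time for $g\in L^2(\Sigma)$, so this route yields $H^2((0,T);X_{\alpha_0-2})$ rather than $C^2$ at the second-derivative level---consistent with the $H^2$ regularity the paper itself records in the last rows of Tables~\ref{tableGENEresu} and \ref{tableMGTresu}.
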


\begin{remarks}
\begin{rm}
1. With reference to the assertion $iii)$ above, 
we remind the reader that the proper value of Sobolev exponent $\alpha_0$ are
given in \eqref{e:alfazero}.

\noindent
2. The properties stated in the previous Theorem justify \eqref{e:waves-explicit} as a formula for the solutions to the IBVP~\eqref{e:ibvp-wave}, since the following fact is easily checked: when $u_0, u_1\in \cD(\Omega)$ ($C^\infty(\Omega)$ functions with compact support), 
$f \in \cD((0,T)\times \Omega)$, $g \in \cD((0,T)\times \Gamma)$, then $u-Gg\in C([0,T],\cD(A))\cap C^1([0,T],\cD(A))\cap C^2([0,T];L^2(\Omega))$ and the following equality holds:
\begin{equation*}
u_{tt}(t)=A(u(t)-Gg(t))+f(t)\,,
\end{equation*}
along with $u(0)=u_0$, $u_t(0)=u_1$.
Thus, the boundary condition $\cT u=g$ is satisfied in the sense that $u(t)-Gg(t)\in \cD(A)$
for almost any $t$.

\end{rm}
\end{remarks}


\section{The MGT equation as an equation with memory}  \label{sec:memory}

We initially proceed formally.
Rewrite the first hand side of equation \eqref{e:mgt} as
\begin{equation}\label{eq:MGTPasso1}
\begin{split}
& u_{ttt}+\alpha u_{tt} -c^2 \Delta u -b \Delta u_t =
\\[1mm]
& \myspace
= \big(u_{tt}-b \Delta u\big)_t 
+\alpha \big(u_{tt}-b\Delta u\big) - c^2\Delta u +\alpha b \Delta u=
\\[1mm]
& \myspace
= \big(u_{tt}-b \Delta u\big)_t +\alpha \big(u_{tt}-b\Delta u\big)+b \gamma \Delta u =0
\end{split}
\end{equation}
where we recall that $\gamma = \alpha -c^2/b$.
Solving the equation 
\begin{equation*}
\big(u_{tt}-b \Delta u\big)_t =-\alpha \big(u_{tt}-\Delta u\big)-b \gamma \Delta u 
\end{equation*}
in the `unknown' $u_{tt}-b \Delta u$ gives the following integral equation in the unknown
(and in fact not yet defined as solution) $u$:
\begin{equation}\label{eq:MGTriscritta}
u_{tt}-b \Delta u= e^{-\alpha t}\xi-b\gamma \int_0^t e^{-\alpha (t-s)}\Delta u(s) \, ds\,,
\end{equation}
with $\xi= u_2-b\Delta u_0$.

Thus, in view of the obtained equation~\eqref{eq:MGTriscritta}, we consider the following 
(more general) model equation with persistent memory, depending on the parameter $\xi$:
\begin{equation} \label{eq:MEMORY} 
u_{tt}-b \Delta u= -b\gamma \int_0^t N(t-s)\Delta u(s) \, ds + F(t)\xi
\end{equation}
(already appeared---as \eqref{e:memory}---in the Introduction and recorded here for the 
reader's convenience; notice that both functions $N(t)$ and $F(t)$ equal $e^{-\alpha t}$
in the MGT equation). 


\begin{remark}\label{r:role-of-b}
\begin{rm}
If it happens that $\gamma=0$, then \eqref{eq:MGTriscritta} is nothing but a wave equation
with affine term 
$F(t)\xi$ and the regularity of the corresponding solutions
follows from Theorem~\ref{teo:propertyONDE}.
Thus, we explicitly assume $\gamma \ne 0$, and recall from the Introduction that $b>0$.
It is important to emphasize that in the case $b=0$ the problem is ill-posed, 
since the semigroup generation fails, as proved in~\cite[Theorem~1.1]{kalt-etal_2011};
instead, if $b<0$ then the PDE becomes a {\em nonlocal} elliptic equation of a kind
studied by Skubacevski\v{i} in~\cite{skubacevskii}.
\end{rm}
\end{remark}

The regularity analysis of equation \eqref{eq:MEMORY} is carried out under the assumptions listed below.
 
\begin{hypotheses}\label{a:kernel-and-affine}
i) The  coefficient $b$ is positive.
\hskip 1mm
ii) The memory kernel $N(t)$ and the function $F(t)$ 
are real valued;
$N(t)\in H^2(0,T)$ while $F(t)\in L^2(0,T)$ for every $T>0$.
\end{hypotheses}


%


 
\subsection{An equivalent Volterra integral equation}

A first step in our analysis is to show that we can get rid of the (second order) 
differential operator in the convolution term of \eqref{eq:MEMORY}.
To do so, let us preliminarly introduce the Volterra equation of the second kind
\begin{equation}\label{e:volterra-2-kind}
X(t)- \gamma \int_0^t N(t-s) X(s) \, ds = G(t)\,, \qquad t\in [0,T]\,.
\end{equation}
This equation has a unique solution $X(t)$ given by the following formula:
\begin{equation}\label{eq:soluFORMvolte}
X(t)=G(t)-\int_0^t R_0(t-s) G(s)\,ds\,.
\end{equation}
where $R_0(\cdot)$ is the (unique) solution to the integral equation 
\begin{equation}\label{e:resolvent-kernel}
R_0(t)-\gamma \int_0^t N(t-s) R_0(s)\,ds = -\gamma N(t)\,, \qquad t\in [0,T]\,.
\end{equation}
The function $t \longmapsto R_0(t)$ is the {\em resolvent kernel} of the Volterra equation.
An important observation is that $R_0\in H^2(0,T)$ since $N\in H^2(0,T)$ and 
\mbox{$R_0(0)=-\gamma N(0)$}. 
We then see (either from~\eqref{eq:soluFORMvolte} or from \eqref{e:volterra-2-kind}) that if 
$G(t)$ is continuous then $X(t)$ is continuous; if $G(t)$ is square integrable then 
$X(t)$ is square integrable.


\smallskip
We now perform several formal computations which will lead to a definition of the solutions
to equation \eqref{eq:MEMORY} (with appropriate initial and boundary data).
%
Rewrite the equation \eqref{eq:MEMORY} in the following different fashion, 
\begin{equation}\label{e:like-a-volterra}
\Delta u -\gamma \int_0^t N(t-s)\Delta u(s) \, ds =\frac{1}{b}\big(u_{tt}- F(t)\xi\big)\,,
\end{equation}
that is a Volterra integral equation of the second kind in the unknown $\Delta u$.
With reference to the general form \eqref{e:volterra-2-kind}, we have here 
\begin{equation*}
G(t)=\frac{1}{b}\big(u_{tt}- F(t)\xi\big)\,.
\end{equation*}
The formula \eqref{eq:soluFORMvolte} gives
\begin{equation*}
b\Delta u =u_{tt}- F(t)\xi- \int_0^t R_0(t-s)\big(u_{ss}(s)- F(s)\xi\big)\,ds\,,
\end{equation*}
where $R_0(\cdot)$ is the unique solution to the integral equation \eqref{e:resolvent-kernel},
as explained above. 
Since $R_0\in H^2(0,T)$ we are allowed to integrate by parts twice, thereby obtaining
\begin{equation*}
\begin{split}
b\Delta u & = u_{tt}- F(t)\xi
- \Big\{R_0(t-s)u_t(s)\big|_{s=0}^{s=t}-\int_0^t R_0'(t-s)u_s(s)\,ds\Big\} +
\\[1mm]
& \myspace + \int_0^t R_0(t-s)F(s)\xi\,ds=
\\[1mm]
& = u_{tt}- F(t)\xi-R_0(0)u_t(t) + R_0(t) u_1 - R_0'(0)u(t)-R_0'(t)u_0-
\\[1mm]
& \myspace - \int_0^t R_0''(t-s)u(s)\,ds +  \int_0^t R_0(t-s)F(s)\xi\,ds\,,
\end{split}
\end{equation*}
where the memory term does not contain differential operators.

\begin{remark}
\begin{rm}
The computations carried out so far---known as MacCamy's trick 
\cite{maccamy_1977}---are purely formal, since the solutions to the equation \eqref{eq:MEMORY}
have not yet been defined.
\end{rm} 
\end{remark}

The obtained equation is a wave equation perturbed by a persistent memory, namely,
\begin{equation*}
\begin{split}
u_{tt} & = b(\Delta-I) u + (R_0'(0)+b) u(t) + R_0(0)u_t(t) + \int_0^t R_0''(t-s)u(s)\,ds-
\\[1mm]
& \myspace -R_0'(t)u_0 - R_0(t) u_1 + \Big\{F(t)\xi -\int_0^t R_0(t-s)F(s)\xi\,ds\Big\}\,.
\end{split}
\end{equation*}
The introduction of the function
\begin{equation} \label{e:variable-v}
v(t) = e^{-\frac{1}{2}R_0(0)t} u(t)
\end{equation}
enables us to eliminate the term $R_0(0)u_t$, and to attain the following equation in the unknown $v$:
\begin{equation} \label{eq:MEMORY-for-v} 
v_{tt}=b (\Delta v-v)+ \int_0^t K(t-s)v(s)\,ds + \beta v(t)
+ \big(h_2(t)\xi+h_1(t)u_1+ h_0(t) u_0\big)\,,
\end{equation}
with the constant $\beta$ and the functions $K(\cdot)$, $h_i(\cdot)$, $i=0,1,2$ given
by the formulas below:
\begin{equation}\label{e:various-functions}
\begin{split}
& \text{$K(t) = e^{-\frac{1}{2}R_0(0)t} R_0''(t)$ is square integrable in $(0,T)$;}
\\
& \beta=b+\frac{1}{4}R_0^2(0)+R_0'(0)\,;
\\
& h_0(t)=e^{-\frac12 R_0(0) t} R_0'(t)\in H^1(0,T)\,;
\\
& h_1(t)=e^{-\frac12 R_0(0) t} R_0(t)\in H^2(0,T)\,;
\\
& \text{$h_2(t)= e^{-\frac12 R_0(0) t}\big(F(t)-\int_0^t R_0(t-s) F(s)\,ds\big)$
is square integrable.} 
\end{split}
\end{equation}
The above suggests the following Definition, which is rigorously justified in the Appendix.

\begin{definition} \label{d:def-solution}
Let $\cH$ be a Hilbert space. 
An $\cH$-valued function $t\longmapsto u(t)$ is a solution of equation \eqref{eq:MEMORY} 
supplemented with initial/boundary conditions \eqref{eq:dataDIe:memory} if the function 
$t\longmapsto v(t)$ defined in \eqref{e:variable-v} is an $\cH$-valued 
continuous function which solves the Volterra integral equation \eqref{eq:MEMORY-for-v},
with $\beta$, $K(\cdot)$, $h_i(\cdot)$, $i=0,1,2$ defined by \eqref{e:various-functions}.
\end{definition}

\begin{remark}
\begin{rm}
In the case $F(t)\equiv N(t)=e^{-\alpha t}$, then the above definition yields the definition
of solutions to the MGT equation \eqref{e:mgt}, 
with initial/boundary conditions \eqref{e:IC}-\eqref{e:BC}.

\end{rm} 
\end{remark}

On the basis of Definition~\ref{d:def-solution} we are led to study the regularity of solutions the following IBVP for the wave equation with memory \eqref{eq:MEMORY-for-v}, that is: 
\begin{equation}\label{ibvp-for-v}
\begin{cases}
v_{tt}=b (\Delta v-v)+ \int_0^t K(t-s)v(s)\,ds + \beta v(t)
+ \big(h_2(t)\xi+h_1(t)u_1+ h_0(t) u_0\big)
\\[1mm]
v(0,\cdot)=v_0\,, \; v_t(0,\cdot)=v_1
\\[1mm]
\cT v=e^{-\frac{1}{2}R_0(0)t} g\,,
\end{cases}
\end{equation}
where initial data are related to the ones of $u$ via the following relations:
\begin{equation} \label{eq:datiUdatiV}
v_0=u_0\,, \qquad v_1= u_1-\frac{1}{2}R_0(0)u_0\,.
\end{equation} 

The next Proposition connects the IBVP \eqref{ibvp-for-v} to a Volterra equation of the
second kind, with suitable kernel and affine term.

\begin{proposition} \label{p:volterra}
Any solution to the initial/boundary value problem \eqref{ibvp-for-v} solves the Volterra
equation 
\begin{equation} \label{eq:represent}
v(t)+\int_0^t L(t-s)v(s)\,ds=H(t)\,,
\end{equation}
where $L(\cdot)$ is a strongly continuous kernel defined by
\begin{equation} \label{eq:defiL}
L(t)v=-\frac{\beta}{\sqrt b} \cA^{-1}R_-(\sqrt b t) v
-\frac{1}{\sqrt b}\cA^{-1}\int_0^t R_-(\sqrt b(t-s)) K(s)v\,ds
\end{equation}
(and $K(\cdot)$ is defined explicitly in \eqref{e:various-functions}), while
the affine term $H(\cdot)$ is given by 
\begin{equation} \label{eq:defiH}
\begin{split}
H(t)&=\Big[R_+(\sqrt{b}t)-\frac{R_0(0)}{2\sqrt{b}}\cA^{-1}R_-(\sqrt{b}t)\Big]u_0
+\frac{1}{\sqrt{b}}\cA^{-1} R_-(\sqrt{b}t) u_1-
\\[1mm]
& \qquad -\sqrt{b} \cA\int_0^t R_-(\sqrt{b}(t-s)) G\,e^{-\frac{1}{2}R_0(0)s} g(s)\,ds+
\\[1mm]
& \qquad +\frac{1}{\sqrt{b}} \cA^{-1}\int_0^t R_-(\sqrt{b}(t-s))
\big[h_2 (s)\xi+ h_1(s)u_1+h_0(s)u_0\big]\, ds\,.
\end{split}
\end{equation}
We recall once more that $R_0(\cdot)$ is the (scalar) resolvent kernel defined---in terms of 
$N(\cdot)$---by the integral equation \eqref{e:resolvent-kernel}.
\end{proposition}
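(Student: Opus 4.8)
The plan is to solve the IBVP \eqref{ibvp-for-v} by treating everything on the right-hand side of the wave equation---apart from the principal part $b(\Delta v - v)$---as a forcing term, and then to invoke the explicit representation formula for wave solutions recorded in \eqref{e:waves-explicit}. The crucial point is that the principal operator appearing in \eqref{ibvp-for-v} is $b(\Delta - I) = bA$, so the relevant cosine/sine operators are generated by $bA$ rather than $A$; this accounts for the factors $\sqrt b$ in the arguments of $R_\pm(\sqrt b\,t)$ and for the scaling constants $1/\sqrt b$ and $\sqrt b$ in \eqref{eq:defiL}--\eqref{eq:defiH}. Concretely, I would set the wave forcing term to be
\begin{equation*}
f(t) = \beta v(t) + \int_0^t K(t-s)v(s)\,ds + \big(h_2(t)\xi + h_1(t)u_1 + h_0(t)u_0\big)\,,
\end{equation*}
and the boundary datum to be $e^{-\frac12 R_0(0)t}g(t)$, then read off the representation with these data substituted.

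First I would rescale time (or equivalently absorb the $b$ into $A$) so that the operator $\sqrt b\,\cA = \sqrt b\,i(-A)^{1/2}$ plays the role that $\cA$ plays in the pure wave formula \eqref{e:waves-explicit}; the associated cosine and sine operators are exactly $R_+(\sqrt b\,t)$ and $R_-(\sqrt b\,t)$, with $R_\pm$ as in \eqref{eq:defiOperRpm}. Plugging the initial data $(v_0,v_1) = (u_0,\,u_1-\tfrac12 R_0(0)u_0)$ from \eqref{eq:datiUdatiV} into the free-evolution part of \eqref{e:waves-explicit} produces the first two brackets of $H(t)$: the term $R_+(\sqrt b\,t)u_0$ comes from $R_+(\sqrt b\,t)v_0$, while the sine term splits as $\frac{1}{\sqrt b}\cA^{-1}R_-(\sqrt b\,t)v_1 = \frac{1}{\sqrt b}\cA^{-1}R_-(\sqrt b\,t)u_1 - \frac{R_0(0)}{2\sqrt b}\cA^{-1}R_-(\sqrt b\,t)u_0$, which is precisely the combination displayed in \eqref{eq:defiH}. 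The boundary term $-\sqrt b\,\cA\int_0^t R_-(\sqrt b(t-s))G\,e^{-\frac12 R_0(0)s}g(s)\,ds$ and the affine contributions $h_i$ likewise arise from the corresponding integral terms of \eqref{e:waves-explicit} after the $\sqrt b$-rescaling. (The factor $\sqrt b$ in front of $\cA$ in the Green-map term, as opposed to $1/\sqrt b$ elsewhere, is a consequence of the scaling $\cA \mapsto \sqrt b\,\cA$ acting differently on the $A G$ versus the $\cA^{-1}$ terms, and I would verify this bookkeeping carefully.)

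The remaining two pieces of the forcing, namely $\beta v(t)$ and the convolution $\int_0^t K(t-s)v(s)\,ds$, still involve the unknown $v$, so they cannot be absorbed into the affine term $H$; instead they generate the memory kernel $L$. Applying the $f$-dependent part of \eqref{e:waves-explicit}, i.e. $\frac{1}{\sqrt b}\cA^{-1}\int_0^t R_-(\sqrt b(t-s))f(s)\,ds$, to $f(s) = \beta v(s) + \int_0^s K(s-r)v(r)\,dr$ and then interchanging the order of integration in the double convolution (Fubini) yields, after relabelling, the kernel $L(\cdot)$ of \eqref{eq:defiL}: the term $-\frac{\beta}{\sqrt b}\cA^{-1}R_-(\sqrt b\,t)$ comes from the $\beta v$ contribution, and the inner convolution produces $-\frac{1}{\sqrt b}\cA^{-1}\int_0^t R_-(\sqrt b(t-s))K(s)\,ds$. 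Collecting the $v$-dependent terms on the left gives exactly \eqref{eq:represent}. The one genuinely nontrivial point---which I would state as a lemma or cite to the well-posedness theory of \eqref{e:waves-explicit}---is that these manipulations are legitimate in the function-space sense of Definition~\ref{d:def-solution}, i.e. that a continuous $\cH$-valued $v$ solving \eqref{eq:MEMORY-for-v} indeed satisfies the representation \eqref{e:waves-explicit} with the indicated forcing; the operator bounds from Theorem~\ref{teo:propertyONDE} guarantee that each term lies in the right space and that $L(\cdot)$ is strongly continuous. Since the statement claims only that any solution \emph{solves} the Volterra equation (one direction), no contraction/fixed-point argument is needed here, and the main obstacle reduces to the careful justification of the Fubini interchange and the $\sqrt b$-scaling bookkeeping, both of which are routine given the analyticity and group properties of the operators already established.
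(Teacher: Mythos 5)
Your proposal is correct and takes essentially the same route as the paper's own (deliberately terse) proof: the paper likewise reads \eqref{ibvp-for-v} as a wave equation with generator $bA$---whose cosine and sine operators are $R_+(\sqrt{b}\,t)$ and $R_-(\sqrt{b}\,t)$---and obtains \eqref{eq:represent} directly from the representation formula \eqref{e:waves-explicit}, with the $v$-dependent forcing terms $\beta v+K*v$ moved to the left to form the kernel $L$ and the data terms assembled into $H$. Your $\sqrt{b}$-bookkeeping (in particular $bA(\sqrt{b}\,\cA)^{-1}=\sqrt{b}\,\cA$ in the Green-map term, and the splitting of $v_1=u_1-\tfrac12 R_0(0)u_0$ via \eqref{eq:datiUdatiV}) and the Fubini interchange in the double convolution are exactly the details the paper dismisses as ``straightforward,'' so no gap remains.
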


\begin{proof}
The proof is straightforward, in view of formula \eqref{e:waves-explicit} for the solution
to wave equations with initial and boundary data. 
We just recall here that the abstract operator $A$ is the realization of the differential operator $\Delta -I$ with boundary conditions driven by $\cT$, while $R_+(\sqrt{b}t)$, the {\em cosine} operator generated by $bA$, and $R_-(\sqrt{b}t)$ are defined in~(\ref{eq:defiOperRpm}).
We finally note that $H(\cdot) z\in C([0,T];X_\alpha)$, for every $z\in X_\alpha$.
\end{proof}


\section{Interior regularity for the equation \eqref{e:memory}}  \label{s:main_1}
In this Section we see how the regularity results pertaining to wave equations stated
in Theorem~\ref{teo:propertyONDE} can be suitably extended to the general equation with
memory of the form \eqref{eq:MEMORY}.
This will eventually imply the {\em stronger} regularity of solutions to the third order
MGT equation \eqref{e:mgt} (see the next Section). 

\smallskip
The key and starting point is the Volterra integral equation \eqref{eq:represent} in the
unknown $v$.
Its kernel $L(\cdot)$ is now operator valued and \emph{strongly continuous} from 
$[0,+\infty)$ to $\cL(X_\alpha)$ for every $\alpha$.
By using Theorem~\ref{teo:propertyONDE} we will derive the regularity properties of the
right hand side of \eqref{eq:represent}, that will be inherited by $v$ and then by the 
solutions to the wave equation with memory \eqref{eq:MEMORY}.
These properties will be expressed in terms of the boundary datum $g$, as well as of
initial data $u_0$, $u_1$ and $\xi$.

It is convenient to write explicitly the solution of a Volterra integral equation in a Hilbert space $\cH$. 
We introduce the notation $*$ for the convolution,
\begin{equation*}
L * h=\int_0^t L(t-s)h(s)\,ds=\int_0^t L(s)h(t-s)\,ds\,.
\end{equation*}
Here $L(t)$ is a strongly continuous function of time, with values in $\cL(\cH)$ and 
$h(t)$ is an integrable $\cH$-valued function. 
\\
Moreover, let $L^{(*n)}$ denote iterated convolutions, recursively defined by the following equalities
\begin{equation*}
L^{(*1)}=L\,,\quad  L^{(*(n+1))}*h=L*\Big( L^{(*n)}*h\Big) 
\end{equation*}
(for every integrable $\cH-$valued function $h$).
Then, the solution to the Volterra equation \eqref{eq:represent}---that is $v+L*v=H$, in
short---is 
\begin{equation*}
v=H+\sum _{k=2}^{\infty} L^{(*k)}*H\,.
\end{equation*}
Uniform convergence of the series is easily proved.
In the special case of our interest, $\cH=X_\alpha$ and $L(t)$ is given by \eqref{eq:defiL}.
The following result is well known.

\begin{lemma} \label{lemmaVOLTE}
Let $T>0$ and let the kernel $L(\cdot)$ be given by \eqref{eq:defiL}.
If $H\in C([0,T];X_\alpha)$, then the solution $v$ of the Volterra equation 
$v+L*v=H$ satisfies $v\in C([0,T];X_\alpha)$.  
\end{lemma}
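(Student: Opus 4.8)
The plan is to read the conclusion directly off the Neumann (resolvent) series representation recalled above, thereby reducing the statement to the absolute and uniform convergence of that series in the Banach space $C([0,T];X_\alpha)$. Once convergence is secured, the sum $v$ is automatically continuous with values in $X_\alpha$, and term-by-term manipulation identifies it with the solution of \eqref{eq:represent}.

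First I would record the two structural facts about the kernel that make everything work. From its explicit form \eqref{eq:defiL}, the operator $L(t)$ is a linear combination of $\cA^{-1}R_-(\sqrt b t)$ and of the convolution term $\cA^{-1}\int_0^t R_-(\sqrt b(t-s))K(s)\,ds$. Since $\cA^{-1}$ is bounded on each $X_\alpha$, the family $R_-(\sqrt b\,\cdot)$ is strongly continuous (being built from the $C_0$-group generated by $\cA$), and $K\in L^2(0,T)$, it follows that $t\mapsto L(t)$ is a strongly continuous $\cL(X_\alpha)$-valued function on $[0,T]$, as already asserted. By the uniform boundedness principle applied on the compact interval, $M:=\sup_{t\in[0,T]}\|L(t)\|_{\cL(X_\alpha)}<\infty$.

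Next I would establish the elementary but essential closure property: if $M(\cdot)$ is a strongly continuous $\cL(X_\alpha)$-valued function and $h\in C([0,T];X_\alpha)$, then $M*h\in C([0,T];X_\alpha)$. This follows from the joint continuity of $(t,s)\mapsto M(t-s)h(s)$ together with dominated convergence (equivalently, uniform continuity on the compact square), and it shows at once that every iterate $L^{(*k)}*H$ is again continuous with values in $X_\alpha$. The heart of the argument is then the factorial estimate, proved by induction on $k$:
\begin{equation*}
\big\|\big(L^{(*k)}*H\big)(t)\big\|_{X_\alpha}\le \frac{(M t)^k}{k!}\,\sup_{0\le r\le t}\|H(r)\|_{X_\alpha}\,,\qquad t\in[0,T]\,.
\end{equation*}

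Finally I would conclude. The estimate above dominates the general term of the series by $(MT)^k/k!$ times $\|H\|_{C([0,T];X_\alpha)}$, whence the series converges absolutely and uniformly on $[0,T]$ in the $X_\alpha$-norm; its sum $v$ therefore belongs to $C([0,T];X_\alpha)$, being a uniform limit of continuous functions. The same uniform convergence legitimizes convolving the series term by term with $L$, which verifies the identity $v+L*v=H$ and identifies $v$ with the (unique) solution of \eqref{eq:represent}. I do not expect a genuine obstacle here: the only point requiring care is the interplay between the \emph{strong} continuity of the operator-valued kernel and the continuity of the convolution, after which the classical Volterra factorial bound delivers convergence automatically.
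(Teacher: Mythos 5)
Your proposal is correct and is exactly the argument the paper has in mind: the paper states the lemma as ``well known'' without proof, having just displayed the resolvent series $v=H+\sum_{k\ge 2}L^{(*k)}*H$ and remarked that its uniform convergence ``is easily proved.'' Your write-up simply supplies the standard details of that sketch---strong continuity plus uniform boundedness of $L(\cdot)$ via \eqref{eq:defiL}, continuity of the convolution, the factorial bound $\|(L^{(*k)}*H)(t)\|_{X_\alpha}\le (Mt)^k/k!\,\sup_{r\le t}\|H(r)\|_{X_\alpha}$, and term-by-term verification of $v+L*v=H$---so there is nothing to fix.
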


We will repeatedly use Lemma~\ref{lemmaVOLTE} in order to pinpoint the regularity of the solutions
to the initial/boundary value problems associated with the equation \eqref{eq:MEMORY}.

\smallskip

\begin{theorem}[Regularity for equation \eqref{eq:MEMORY}]\label{t:main_1}
Consider Eq.~\eqref{eq:MEMORY} with initial data $(u_0,u_1)$ and boundary
data defined by \eqref{e:BC}.
Then, the regularity of the (linear) map $(u_0,u_1,\xi,g) \longmapsto (u,u_t,u_{tt})$
is detailed in Table~\ref{tableGENEresu}. 
\begin{table}[h]
\begin{center}

\begin{tabular}{|c|c|c|c|c|}
\hline 
&&&&\\
$u_0$ & $u_1$ & $\xi$ & $g $ & $u=u(t,x)$ solution of \eqref{eq:MEMORY}\\
&&&&
\\

 &&&&
\\
 \hline
&&&&
\\
$X_\lambda$ &    $0$ &    $0$ &    $0$ & 
$C([0,T];X_\lambda)\cap C^1([0,T];X_{\lambda-1})\cap C^2([0,T];X_{\lambda-2})$
\\
&&&&\\
\hline
&&&&\\
$0$ &   $X_\mu$ & $0$ &   $0$ & $C([0,T];X_{\mu+1})\cap C^1([0,T];X_\mu)\cap C^2([0,T]; X_{\mu-1})$ 
\\
&&&&\\
\hline
&&&&\\
$0$ & $0$ & $X_\nu$ & $0$ &$\left\{\begin{array}{l}
\mbox{if $F(t)\in L^2(0,T)$   then:}\\
  C([0,T];X_{\nu+1})\cap C^1([0,T]; X_{\nu })
\cap H^2([0,T]; X_{\nu-1})\,; \\
\mbox{if $F(t)\in H^1(0,T)$ then:}\\
   C([0,T];X_{\nu+2})\cap C^1([0,T]; X_{\nu+1})
\cap H^2([0,T]; X_\nu) 
\end{array}\right.$
\\
&&&&\\
  \hline 
  &&&&\\
 $0$ &    $0$ &$0$ & $ L^2(\Sigma) $ & 
$C([0,T];X_{\alpha_0})\cap C^1([0,T];X_{\alpha_0-1 })\cap H^2([0,T]; X_{\alpha_0-2})$  
\\
&&&&\\
  \hline
\end{tabular}
\caption{Regularity of solutions to the equation \eqref{eq:MEMORY}. The transformations are continuous between the indicated spaces.} 
\label{tableGENEresu}
\end{center}
\end{table}

\end{theorem}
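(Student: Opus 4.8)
The plan is to lean on the three reductions already in place. Proposition~\ref{p:volterra} recasts the IBVP \eqref{ibvp-for-v} as the Volterra equation \eqref{eq:represent}, $v+L*v=H$; Lemma~\ref{lemmaVOLTE} guarantees that its solution $v$ lies in $C([0,T];X_\alpha)$ as soon as the affine term $H$ does; and the change of variable \eqref{e:variable-v}, $u(t)=e^{\frac12 R_0(0)t}v(t)$, is a smooth-in-time isomorphism transferring regularity between $v$ and $u$ without loss. Hence the entire theorem reduces to reading off the regularity of $H$ — given explicitly by \eqref{eq:defiH} — and then of its time derivatives. Since $(u_0,u_1,\xi,g)\mapsto H$ is linear, I would treat the four rows of Table~\ref{tableGENEresu} separately, activating one datum at a time, and recombine by superposition. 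Continuity of the asserted maps comes for free: every operator in \eqref{eq:defiL}--\eqref{eq:defiH} is bounded and the Neumann series $v=H+\sum_{k\ge 2}L^{(*k)}*H$ converges in operator norm.

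First I would fix, for each datum, the leading (zeroth-order-in-time) spatial regularity of $H$ by inspecting \eqref{eq:defiH} term by term through Theorem~\ref{teo:propertyONDE}. For $u_0\in X_\lambda$ the contributions are the group terms $R_+(\sqrt b t)u_0$ and $\cA^{-1}R_-(\sqrt b t)u_0$ together with the forcing built from $h_0\in H^1$; the group terms, assessed via part i), give $X_\lambda$, while the forcing term is smoother, so $H\in C([0,T];X_\lambda)$. For $u_1\in X_\mu$ the genuine-velocity term $\cA^{-1}R_-(\sqrt b t)u_1$ gains one unit through $\cA^{-1}$ and yields $X_{\mu+1}$, whereas the forcing term $\cA^{-1}\int_0^t R_-(\sqrt b(t-s))h_1(s)u_1\,ds$ with $h_1\in H^2$ is even more regular; hence $H\in C([0,T];X_{\mu+1})$. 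For $\xi\in X_\nu$, which enters only through the forcing $h_2\xi$, part ii) gives $X_{\nu+1}$ when $h_2\in L^2$ (i.e.\ $F\in L^2$); when $F\in H^1$ one has $h_2\in H^1$ and the integration by parts of the remark following \eqref{e:waves-explicit-second} — legitimate precisely because $R_0\in H^2$ — upgrades the gain to $X_{\nu+2}$. Finally, for $g\in L^2(\Sigma)$ only the boundary term $\sqrt b\,\cA\int_0^t R_-(\sqrt b(t-s))G e^{-\frac12 R_0(0)s}g(s)\,ds$ survives, and part iii) delivers $H\in C([0,T];X_{\alpha_0})$ with the Sobolev exponent $\alpha_0$ of \eqref{e:alfazero}. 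Lemma~\ref{lemmaVOLTE} then promotes each of these to the corresponding regularity of $v$, and the change of variable to that of $u$.

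Next I would extract the regularity of $u_t$ and $u_{tt}$, which is where the two flavours $C^2$ versus $H^2$ in the table arise. The cleanest route is to differentiate the representation (via the analogues of \eqref{e:waves-explicit-prime}--\eqref{e:waves-explicit-second}, noting that $R_-(0)=0$ forces $L(0)=0$, so that $\frac{d}{dt}(L*v)=L'*v$ with $L'$ again strongly continuous), and for the top derivative to read $v_{tt}$ off the equation \eqref{eq:MEMORY-for-v} as $v_{tt}=bAv+\int_0^t K(t-s)v(s)\,ds+\beta v+(h_2\xi+h_1u_1+h_0u_0)$. The group-type contributions coming from $u_0$ and $u_1$ may be differentiated freely, each time derivative costing exactly one unit of space regularity while staying continuous in time; this produces the $C^2$ rows. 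By contrast, for $\xi$ and $g$ the second derivative reproduces the forcing $h_2\xi$ (merely $L^2$ in time) and the boundary datum $g\in L^2(\Sigma)$, so one only secures square integrability in time, i.e.\ the $H^2([0,T];\,\cdot\,)$ statements. The Leibniz rule and the smoothness of $e^{\frac12 R_0(0)t}$ then carry everything back from $v$ to $u$ with the stated shifts.

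The principal obstacle is not the Volterra bookkeeping — routine once \eqref{eq:defiH} is in hand — but two points requiring care. The first is the extra unit of regularity in the $F\in H^1$ subcase: it rests on the integration-by-parts identity quoted after \eqref{e:waves-explicit-second}, which in turn needs the smoothness $R_0\in H^2$ of the resolvent kernel (so that $h_2\in H^1$ and $h_2(0)$, $h_2'$ are meaningful) and must be applied rigorously rather than formally. The second, genuinely deep, ingredient is the boundary row: the sharp value of $\alpha_0$ is not elementary and is imported wholesale from the hidden-regularity/trace theory embodied in part iii) of Theorem~\ref{teo:propertyONDE}, the Neumann case resting on the microlocal estimates of Lasiecka--Triggiani and Tataru. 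Every other estimate is either a direct appeal to Theorem~\ref{teo:propertyONDE} or the strong continuity of the Volterra kernels in \eqref{eq:defiL}.
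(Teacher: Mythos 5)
Your proposal is correct and follows essentially the same route as the paper's proof: reduction to the Volterra equation of Proposition~\ref{p:volterra}, datum-by-datum analysis of the affine term $H$ in \eqref{eq:defiH} through Theorem~\ref{teo:propertyONDE}, promotion to $v$ (and then $u$) via Lemma~\ref{lemmaVOLTE} and the change of variable \eqref{e:variable-v}, the integration by parts (legitimate since $R_0\in H^2(0,T)$, hence $h_2\in H^1$) for the extra unit in the $F\in H^1$ subcase, and the wholesale import of $\alpha_0$ from the trace theory behind part iii). The only, harmless, deviation is in the bookkeeping of derivatives—the paper derives Volterra equations for $v_t$ and $v_{tt}$ with the same kernel $L$ and new affine terms $H_1$, $H_2$ and reapplies Lemma~\ref{lemmaVOLTE}, whereas you exploit $L(0)=0$ to write $v_t=H_t-L'*v$ and read $v_{tt}$ off the equation (with the implicit $-bAGg$ correction in the boundary row)—an equivalent manipulation resting on the same ingredients.
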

\begin{proof}
The proof of the several statements contained in Table~\ref{tableGENEresu}
is structured in few major steps.

\smallskip
\paragraph{\bf 0. Premise and outline.}
Consider the Volterra equation \eqref{eq:represent}, and note that the functions 
$v_t(t)$ and $v_{tt}(t)$ solve the same Volterra integral equation of the second
kind in a Hilbert space, yet with different affine terms, $H_1(\cdot)$ and $H_2(\cdot)$ say,
respectively, which will be computed in the next step.
In view of Lemma~\ref{lemmaVOLTE}, the (time and space) regularity of these affine terms---depending on $u_0$, $u_1$, $\xi$ and $g$---will naturally bring about the (time and space) regularity for the triple $(v,v_t,v_{tt})$.
\\
To do so we will set to zero all data but one.
%
Finally, the derived regularity properties will be inherited by the triple 
$(u,u_t,u_{tt})$ pertaining to the original equation with persistent memory 
\eqref{eq:MEMORY}, still depending on $u_0$, $u_1$, $\xi$ and $g$.

\smallskip
\paragraph{\bf 1. The affine terms of Volterra equations.}
We rewrite \eqref{eq:represent} in the form
\begin{equation*}
v(t)+\int_0^t L(s)v(t-s) ds= H(t)
\end{equation*}
and compute the derivatives of both the sides. 
Inserting the expressions \eqref{eq:defiL} and \eqref{eq:defiH} of $L(t)$ and $H(t)$, and
replacing initial data $v_0$ and $v_1$ with their respective expressions in terms of $u_0$
and $u_1$ (see \eqref{eq:datiUdatiV}), we obtain the following Volterra integral equations
in the unknowns $v_t(t)$ and $v_{tt}(t)$:
\begin{align}
v_t(t)+\int_0^t L(t-s)v_s(s)\,ds= H_1(t)\,,
\label{eq:deriPRIMA}
\\[1mm]
v_{tt}(t)+\int_0^t L(t-s)v_{ss}(s)\,ds = H_2(t)
\label{eq:deriSEconda}
\end{align}
where  
%
\begin{align}
\nonumber & H_1(t):= L(t)v_0+H_t(t)=
\\
&=\Big[\frac{\beta}{\sqrt{b}}\cA^{-1} R_-(\sqrt{b} t)u_0 +
  \frac{1}{\sqrt b}\int_0^t R_-(\sqrt{b}(t-s))\cA^{-1}K(s)u_0\,ds\Big] +H_t(t)\,,
\label{e:y_1}
\\[2mm]
& H _2(t):= \Big[\frac{\beta}{\sqrt b}\cA^{-1} R_-(\sqrt{b}t)u_1
+\frac{1}{\sqrt b} \cA^{-1}\int_0^t R_-(\sqrt{b}(t-s))K(s)u_1\,ds \Big] -
\nonumber \\[1mm]
& \qquad\qquad-\frac{R_0(0)}{2\sqrt b}\Big[\beta\cA^{-1}R_-(\sqrt{b}t)u_0
+\cA^{-1}\int_0^t R_-(\sqrt{b}(t-s))K(s)u_0 \,ds \Big]+
\nonumber \\[1mm]
& \myspace +\beta R_+(\sqrt{b}t)u_0+\int_0^t R_+(\sqrt{b}(t-s)) K(s)u_0\,ds +H_{tt}(t)\,,
\nonumber
\end{align}
while the explicit expression \eqref{eq:defiH} of $H(t)$ is recorded here
for the reader's convenience:
\begin{equation*}
\begin{split}
H(t)&=\Big[R_+(\sqrt{b}t)-\frac{R_0(0)}{2\sqrt{b}}\cA^{-1}R_-(\sqrt{b}t)\Big]u_0
+\frac{1}{\sqrt{b}}\cA^{-1} R_-(\sqrt{b}t) u_1-
\\[1mm]
& \qquad -\sqrt{b} \cA\int_0^t R_-(\sqrt{b}(t-s)) G\,e^{-\frac{1}{2}R_0(0)s} g(s)\,ds+
\\[1mm]
& \qquad +\frac{1}{\sqrt{b}} \cA^{-1}\int_0^t R_-(\sqrt{b}(t-s))
\big[h_2 (s)\xi+ h_1(s)u_1+h_0(s)u_0\big]\, ds\,.
\end{split}
\end{equation*}
As it will appear clear immediately below, we neglected to write explicitly the derivatives
of $H(t)$, just because their regularity is easily deduced invoking once more 
Theorem~\ref{teo:propertyONDE}.

\smallskip
\paragraph{\bf 2a. Effects of boundary data action.}
With $u_0,\,u_1,\,\xi\equiv 0$, $g\in L^2(\Sigma)$, the affine term $H(t)$ 
in \eqref{eq:represent} (recorded above) reduces to 
\begin{equation}
H(t)=-\sqrt{b}\cA\int_0^t R_-(\sqrt{b}(t-s))Gg(s)\, ds\,.
\end{equation}
Therefore we know from assertion iii) \eqref{teo:propertyONDE-3} of 
Theorem~\ref{teo:propertyONDE} that 
\begin{equation*}
(H,H_t,H_{tt})\in C([0,T];X_{\alpha_0}\times X_{\alpha_0-1}\times X_{\alpha_0-2})\,.
\end{equation*}
Thus, Lemma~\ref{lemmaVOLTE} shows that the solutions of the Volterra 
equation \eqref{eq:represent} as well as those pertaining to the former
equation with memory \eqref{eq:MEMORY}
belong to
\begin{equation*} 
C([0,T];X_{\alpha_0})\cap C^1 ([0,T];X_{\alpha_0-1})\cap C^2([0,T];X_{\alpha_0-2})\,.
\end{equation*}
 
\smallskip
\paragraph{\bf 2b. Effects of the initial datum $u_0$.}
Assume $u_1, \,\xi=0$, $g=0$, and $u_0\in X_\lambda$. 
The affine term of the equation \eqref{eq:represent} in the unknown $v$ becomes
\begin{equation*}
H(t) = \Big[R_+(\sqrt{b}t)-\frac{R_0(0)}{2\sqrt{b}}\cA^{-1}R_-(\sqrt{b}t)\Big]u_0
+\frac{1}{\sqrt{b}} \cA^{-1}\int_0^t R_-(\sqrt{b}(t-s)) h_0(s)u_0\, ds\,,
\end{equation*}
so that readily
\begin{equation*}
H\in C([0,T];X_\lambda)\cap C^1([0,T];X_{\lambda-1})\cap C^2([0,T];X_{\lambda-2})\,,
\end{equation*}
which immediately implies $v(t)\in C([0,T];X_\lambda)$.
Recall now   the term $H_1$ in \eqref{e:y_1} and notice that its regularity is determined
by the regularity of $H_t$.  
Then, $H_1$---as well as $v_t$, in view of Lemma~\ref{lemmaVOLTE}---belongs to 
$C^1([0,T];X_{\lambda-1})$, while $H_2$ and then $v_{tt}(t)$ belong to 
$C^1([0,T];X_{\lambda-2})$, which establishes the first row of Table~\ref{tableGENEresu}.

\smallskip
\paragraph{\bf 2c. Effect of the initial datum $u_1$.} 
Assume $u_0,\ \xi=0$, and $g=0$ while $u_1\in X_\mu$.
In this case 
\begin{equation*}
H(t) = \frac{1}{\sqrt{b}}\cA^{-1} R_-(\sqrt{b}t) u_1
+\frac{1}{\sqrt{b}} \cA^{-1}\int_0^t R_-(\sqrt{b}(t-s)) h_1(s)u_1\, ds\,,
\end{equation*}
so that we have a slight regularization
\begin{equation*}
(H,H_t,H_{tt})\in C([0,T];X_{\mu +1}\times X_\mu\times X_{\mu-1});
\end{equation*}
the transformation $u_1\longmapsto H$ is continuous in the indicated spaces 
({\em cf.}~assertion {\em iii)} 
of Theorem~\ref{teo:propertyONDE}).
\\
The obtained regularity for $H$ and its derivatives holds for $H_i$, $i=1,2$, and then
is inherited by the solution $v(t)$: namely, 
\begin{equation*}
v\in C([0,T];X_{\mu+1})\cap C^1([0,T];X_\mu)\cap C^2([0,T]; X_{\mu-1})\,;
\end{equation*}
in turn, the same is valid for $u$, thereby confirming the second row of 
Table~\ref{tableGENEresu}.

\smallskip
\paragraph{\bf 2d. Effect of the parameter $\xi$.}  
We finally discuss the dependence on $\xi$. 
Assume $u_0, u_1=0$, and $g=0$ and $\xi\in X_\nu$. In this case
\begin{equation*}
H(t)=\frac{1}{\sqrt{b}} \cA^{-1}\int_0^t R_-(\sqrt{b}(t-s)) h_2 (s)\xi \,ds
\end{equation*}
and, from \eqref{e:various-functions}, $h_2(t)\in L^2(0,T)$, just like $F(t)$.

We invoke once more item {\em ii)} 
of Theorem~\ref{teo:propertyONDE}, and ascertain again a slightly regularizing property: the transformation $\xi\longmapsto v$ is continuous from $X_\nu$ to 
$C([0,T];X_{\nu+1})\cap C^1([0,T];X_\nu)\cap H^2([0,T]; X_{\nu-1})$ 
(while if in addition $F(t)$---and consequently, $h_2(t)$---is continuous, then $v\in C^2([0,T];X_{\nu-1})$).

In the case $F\in H ^2(0,T)$ (as the case of the MGT equation) we have a stronger regularization, since we can integrate by parts as follows:
\begin{equation}
\begin{split}
H(t) &=- \frac{1}{b}\,A^{-1}\int_0^t \frac{d}{ds }R_+(\sqrt{b}(t-s)) h_2 (s)\xi \, ds=
\\[1mm]
& \qquad=- \frac{1}{b}\,A^{-1}\Big[\big(h_2(t)-R_+(\sqrt{b}t) h_2(0)\big)\xi  
-\int_0^t  R_+(\sqrt{b}(t-s)) h_2'(s)\xi \,ds\Big]\,;
\end{split}
\end{equation}
a rigorous justification is found, e.g., in \cite[Lemma~5]{PandAMO}.

For a better understanding, we compute explicitly
\begin{equation*}
\begin{split}
H_t(t) &=-\frac{1}{b}\, A^{-1}\Big[\cancel{h_2'(t)\xi}-\sqrt{b}\cA R_-(\sqrt{b}t)\xi 
-\cancel{h_2'(t)\xi}-
\\[1mm]
& \myspace -\sqrt{b}\cA \int_0^t R_-(\sqrt{b}(t-s)) h_2' (s)\xi \,ds\Big]=
\\[1mm]
&= \frac{1}{b} \cA^{-1} R_-(\sqrt{b}t)\xi + \frac{1}{b} \cA^{-1}\int_0^t R_-(\sqrt{b}(t-s)) h_2'(s)\xi \,ds\in X_{\nu+1}\,,
\end{split}
\end{equation*}
and 
\begin{equation*}
H_{tt}(t) =R_+(\sqrt{b}t)\xi + \int_0^t R_+(\sqrt{b}(t-s)) h_2'(s)\xi \,ds\in X_{\nu}\,,
\end{equation*}
that complete the membership $H\in X_{\nu+2}$.

It is important to note that the space regularity increases of one unity and we get the 
result in the third row of Table~\ref{tableGENEresu}, where $H^2$ is replaced with $C^2$ 
if furthermore $F\in C^2(0,T)$, that is the case of the MGT equation.
\end{proof}


\begin{remark}
\begin{rm} 
The noticeable outcome of the obtained regularity result is that $u_1$ and $\xi$ are regularized by one and, respectively, two unities. 
Hence, when $g=0$, $u_0=0$ while $u_1$ and $\xi$ belong to $X_r$, then 
$(u(t),u_t(t),u_{tt}(t))$ evolves in $X_{r+1}\times X_r\times X_{r-1}$.
\end{rm}
\end{remark}

From Table~\ref{tableGENEresu} of Theorem~\ref{t:main_1} we deduce, in particular, 
the following regularity result. 

\begin{corollary}\label{c:example}
Consider equation \eqref{eq:MEMORY} with initial data $(u_0,u_1)$, and homogeneous
boundary data, namely, $g\equiv 0$ in \eqref{e:BC}.
Then, if $F(\cdot)\in C^1$ and if $(u_0,u_1,\xi)\in X_r\times X_{r-1} \times X_{r-2}$,
then the corresponding weak solution satisfies
\begin{equation*}
(u,u_t,u_{tt})\in C([0,T];X_r\times X_{r-1}\times X_{r-2})\,.
\end{equation*}

\end{corollary}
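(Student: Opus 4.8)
The plan is to derive the corollary from the single-datum regularity already tabulated in Theorem~\ref{t:main_1}, by combining the superposition principle with a careful matching of the Sobolev indices and then upgrading a single time-continuity statement by means of the hypothesis $F\in C^1$. First I would use the linearity of the map $(u_0,u_1,\xi,g)\mapsto u$ asserted in Theorem~\ref{t:main_1} (here with $g\equiv 0$) to split the solution as $u=u^{(0)}+u^{(1)}+u^{(\xi)}$, where $u^{(0)},u^{(1)},u^{(\xi)}$ are the weak solutions of \eqref{eq:MEMORY} driven by the data $(u_0,0,0)$, $(0,u_1,0)$ and $(0,0,\xi)$ respectively. It then suffices to place each summand, together with its first two time derivatives, in $C([0,T];X_r\times X_{r-1}\times X_{r-2})$ and to add.

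Next I would read off the first three rows of Table~\ref{tableGENEresu} with the choices $\lambda=r$, $\mu=r-1$, $\nu=r-2$. The first row gives $(u^{(0)},u^{(0)}_t,u^{(0)}_{tt})\in C([0,T];X_r\times X_{r-1}\times X_{r-2})$ at once. The second row, with $\mu=r-1$, places $(u^{(1)},u^{(1)}_t,u^{(1)}_{tt})$ in $C([0,T];X_{\mu+1}\times X_\mu\times X_{\mu-1})=C([0,T];X_r\times X_{r-1}\times X_{r-2})$, with all three slots already continuous in time. The purpose of the index bookkeeping is precisely that the one-unit (resp.\ two-unit) gain in space regularity associated with $u_1$ (resp.\ $\xi$) compensates the lower starting regularity $u_1\in X_{r-1}$, $\xi\in X_{r-2}$, so that the three diagonals coincide.

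The only delicate point concerns $u^{(\xi)}$. With $\nu=r-2$ the third row yields $u^{(\xi)}\in C([0,T];X_r)$ and $u^{(\xi)}_t\in C([0,T];X_{r-1})$, but it only asserts $u^{(\xi)}\in H^2([0,T];X_{r-2})$, i.e.\ $u^{(\xi)}_{tt}\in L^2(0,T;X_{r-2})$, whereas the corollary demands continuity in time. This is where $F\in C^1$ enters, and it is the genuine obstacle. To remove it I would revisit Step~2d of the proof of Theorem~\ref{t:main_1}: since $u_0=u_1=0$ for this component one has $v_0=v_1=0$, so the Volterra equations \eqref{eq:deriPRIMA}--\eqref{eq:deriSEconda} for $v_t$ and $v_{tt}$ carry the affine terms $H_t$ and $H_{tt}$ with no additional contribution. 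Because $F\in C^1\subset H^1(0,T)$ and $R_0\in H^2(0,T)$, the function $h_2$ of \eqref{e:various-functions} is continuously differentiable, so the integration-by-parts identity already performed in Step~2d is legitimate and gives $H_{tt}(t)=R_+(\sqrt{b}t)h_2(0)\xi+\int_0^t R_+(\sqrt{b}(t-s))h_2'(s)\xi\,ds$. Both summands are continuous in $t$ with values in $X_{r-2}$: the first because $R_+(\sqrt{b}\cdot)$ is a strongly continuous family on $X_{r-2}$, the second because it is the convolution of the integrable $X_{r-2}$-valued map $s\mapsto h_2'(s)\xi$ against that same family. Hence $H_{tt}\in C([0,T];X_{r-2})$, and Lemma~\ref{lemmaVOLTE} transfers this continuity to $v_{tt}$, and thereby to $u^{(\xi)}_{tt}$.

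Finally I would add the three contributions: since each of $u^{(0)},u^{(1)},u^{(\xi)}$, with its first two time derivatives, lies in $C([0,T];X_r\times X_{r-1}\times X_{r-2})$, so does $u$, and continuity of the solution map is inherited from the continuity of the three single-datum maps in Theorem~\ref{t:main_1}. I expect the superposition and the index-matching to be entirely routine; the only real content of the proof is the continuity-in-time upgrade of the $\xi$-driven second derivative, which is exactly what the regularity hypothesis $F\in C^1$ secures.
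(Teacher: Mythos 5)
Your proposal is correct, and it follows the route the paper itself intends: the paper offers no separate proof of Corollary~\ref{c:example}, presenting it as deduced directly from Table~\ref{tableGENEresu} by linearity---precisely your decomposition $u=u^{(0)}+u^{(1)}+u^{(\xi)}$ with the index choices $\lambda=r$, $\mu=r-1$, $\nu=r-2$. Where your write-up adds genuine value is on the $\xi$-component, and you identified the right pressure point: read literally, the third row of the table only gives $u^{(\xi)}\in H^2([0,T];X_{r-2})$ in the second-derivative slot, i.e.\ $u^{(\xi)}_{tt}\in L^2(0,T;X_{r-2})$, and the paper's own remarks in Step~2d of the proof of Theorem~\ref{t:main_1} upgrade $H^2$ to $C^2$ only under $F\in C^2(0,T)$ (or, in the non-integrated-by-parts case, one level lower, at $X_{\nu-1}$), which does not literally match the corollary's hypothesis $F\in C^1$. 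Your re-examination of Step~2d closes exactly this gap: $F\in C^1$ together with $R_0\in H^2(0,T)$ gives $h_2\in C^1$ by \eqref{e:various-functions}, the integration by parts is then legitimate, $H_{tt}(t)=R_+(\sqrt{b}\,t)h_2(0)\xi+\int_0^t R_+(\sqrt{b}(t-s))h_2'(s)\xi\,ds$ is continuous with values in $X_{r-2}$ (strong continuity of the cosine family plus convolution against an integrable scalar), and Lemma~\ref{lemmaVOLTE} applied to \eqref{eq:deriSEconda}---whose affine term is indeed just $H_{tt}$ for this component, since $v_0=v_1=0$, as you correctly noted---transfers the continuity to $v_{tt}$ and hence to $u^{(\xi)}_{tt}$. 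Incidentally, your factor $h_2(0)=F(0)$ in $H_{tt}$ corrects the paper's display, where it is tacitly set to $1$ (true for the MGT specialization but not in general), and your convolution argument in fact shows that $F\in H^1$ would already suffice for this continuity, so your argument is a correct and slightly sharper implementation of the deduction the paper leaves implicit.
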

 

\section{Interior regularity for the MGT equation}\label{sect:RegulaMGT}

In this Section we utilize the analysis performed for the general class of equations
with memory \eqref{eq:MEMORY}, in order to derive a result pertaining
to the MGT equation, that is Theorem~\ref{t:main_2} below.
This Theorem establishes, in particular, the statements of Theorem~\ref{t:sample} detailing
the regularity from the boundary to the interior for the MGT equation (i.e. item {\em iii)}),
as well as the one pertaining to the interior regularity, under homogeneous boundary data
(i.e. item {\em ii)}). 
The latter result is consistent with the analyis formerly carried out in
\cite{kalt-etal_2011}, that brought about semigroup well-posedness
of the MGT equation in the space $\cD(A^{1/2})\times \cD(A^{1/2}) \times L^2(\Omega)$,
$A$ being the proper realization of the Laplacian in $L^2(\Omega)$; 
see \cite[Theorem~1.2]{kalt-etal_2011}.
The peculiar regularity of the MGT equation is here (re)confirmed in a wealth of functional
settings.

Recall that for the special case of the MGT equation we have in particular
\begin{equation*}
N(t)=F(t) = e^{-\alpha t}\,, \qquad \xi = u_2-b\Delta u_0 
\end{equation*}
in \eqref{eq:MEMORY}.
The meaning given to solutions is still the one   in Definition~\ref{d:def-solution}.

We restart from the integral equation which defines the resolvent 
associated with the convolution kernel $-\gamma N(t)$ of \eqref{eq:MEMORY},
that is equation \eqref{e:resolvent-kernel} and that in the present case---with
$N(t)= e^{-\alpha t}$---reads as 
\begin{equation}\label{e:resolvent-eq-mgt}
R_0(t)-\gamma \int_0^t e^{-(t-s)}\, R_0(s)\, ds= - \gamma e^{-\alpha t}\,. 
\end{equation}
It is then easily verified that the solution to \eqref{e:resolvent-eq-mgt} is given by
\begin{equation}\label{e:kernel-mgt}
R_0(t)= -\gamma e^{(\gamma-\alpha)t}=-\gamma e^{-\frac{c^2}{b}t}\,,  
\end{equation}
which gives $R_0(0)= -\gamma$ and hence
\begin{equation*}
\qquad v(t)= e^{\frac{\gamma}2 t}u(t)
\end{equation*} 
for $v$ defined in \eqref{e:variable-v}).

\smallskip
In view of Definition~\ref{d:def-solution}, and taking into account the actual expression
of $R_0(t)$---depending on $N(t)$ and $F(t)$---in \eqref{e:kernel-mgt}, the following instance
of Definition~\ref{d:def-solution} comes into the picture.

\begin{definition}[Instance of Definition~\ref{d:def-solution}]
A function $u=u(t,x)$ is a solution of the IBVP \eqref{e:mgt}-\eqref{e:IC}-\eqref{e:BC}
for the Moore-Gibson-Thompson equationif and only if the function 
$v(t,x) = e^{\frac{\gamma}{2}t}u(t,x)$ solves
\begin{equation*} 
\begin{split}
v_{tt} &=b (\Delta v-v)+ \int_0^t K(t-s)v(s)\,ds + \beta v(t)
\\[1mm]
& \qquad \qquad + h_2(t)\big(u_2-b\Delta u_0\big)+h_1(t)u_1+ h_0(t) u_0\,,
\end{split}
\end{equation*}
where 
\begin{equation*}
\begin{split}
& K(t) = -\gamma (\gamma-\alpha)^2 e^{(\frac{3}{2}\gamma-\alpha)t}\,,
\qquad \beta=b-\gamma\Big(\frac{3}{4}\gamma -\alpha\Big)\,,
\\
& h_0(t)=-\gamma (\gamma-\alpha) e^{(\frac{3}{2}\gamma -\alpha)t}\,,
\quad 
h_1(t)=-\gamma e^{(\frac{3}{2}\gamma -\alpha)t}\,,
\quad
h_2(t)= e^{(\frac{3}{2}\gamma-\alpha)t}\,. 
\end{split}
\end{equation*}

\end{definition}

\medskip
Thus, on the basis of Theorem~\ref{t:main_1}, we develop a picture of the interior
regularity for the MGT equation.

\begin{theorem}[Regularity for the MGT equation]\label{t:main_2} 
The regularity of the map $(u_0,u_1,u_2,g)\longmapsto u$ that associates to initial and 
boundary data the solution $u=u(t,x)$ 
to the Moore-Gibson-Thompson equation \eqref{e:mgt} is described by the Table~\ref{tableMGTresu}.
\begin{table}[h]
\begin{center}
\begin{tabular}{|c|c|c|c|c|}
\hline
&&&&\\
$u_0$ & $u_1$ & $u_2$ & $g $ & $u=u(t,x)$ solution of the MGT equation\\
&&&&
\\
 &&&&
\\
\hline
&&&&
\\
$X_\lambda$ &    $0$ &    $0$ &    $0$ & 
$C([0,T];X_\lambda)\cap C^1([0,T];X_{\lambda })\cap C^2([0,T];X_{\lambda-1})$
\\
&&&&\\
\hline
&&&&\\
$0$ &   $X_\mu$ & $0$ &   $0$ & $C([0,T];X_{\mu+1})\cap C^1([0,T];X_\mu)\cap C^2([0,T]; X_{\mu-1})$ 
\\
&&&&\\
\hline
&&&&\\
$0$ & $0$ & $X_\nu$ & $0$ &  $C([0,T];X_{\nu+2})\cap C^1([0,T]; X_{\nu+1})\cap C^2([0,T]; X_{\nu })$
\\
&&&&\\
  \hline 
  &&&&\\
 $0$ &    $0$ &$0$ & $ L^2(\Sigma) $ & 
$C([0,T];X_{\alpha_0})\cap C^1([0,T];X_{\alpha_0-1 })\cap H^2([0,T]; X_{\alpha_0-2})$  
\\
&&&&\\
  \hline
\end{tabular}
\caption{Regularity of solutions to the equation \eqref{e:mgt}. The transformations are continuous between the indicated spaces.} 
\label{tableMGTresu}
\end{center}
\end{table}

\end{theorem}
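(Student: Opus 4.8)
The plan is to read off each row of Table~\ref{tableMGTresu} from the corresponding row of Table~\ref{tableGENEresu} in Theorem~\ref{t:main_1}, specialized to the MGT data $N(t)=F(t)=e^{-\alpha t}$, $R_0(0)=-\gamma$, and $\xi=u_2-b\Delta u_0$. Since the meaning of solution is the one fixed in Definition~\ref{d:def-solution}, I would track the regularity of $v=e^{\frac{\gamma}{2}t}u$ and then transfer it to $u=e^{-\frac{\gamma}{2}t}v$: multiplication by the smooth scalar $e^{\pm\frac{\gamma}{2}t}$ maps each space $C^k([0,T];X_s)$ into itself by the Leibniz rule, so $u$ inherits verbatim whatever regularity is established for $v$. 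By linearity it suffices to treat one datum at a time.

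Rows two, three and four are immediate. When $u_0=0$ one has $\xi=u_2$, so the datum $g\in L^2(\Sigma)$ (row four) and the datum $u_1\in X_\mu$ (row two) both give $\xi=0$ and reduce literally to the corresponding rows of Table~\ref{tableGENEresu}, while the datum $u_2\in X_\nu$ (row three) is the $\xi$-row of that table with $\nu$ unchanged. Here I would invoke the observation recorded at the end of the proof of Theorem~\ref{t:main_1}, that for $F=e^{-\alpha t}\in C^2(0,T)$ the exponent $H^2$ appearing in the $\xi$-row upgrades to $C^2$, which yields the $C^2([0,T];X_\nu)$ membership in the third row of Table~\ref{tableMGTresu}.

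The only substantial row is the first, and this is where the characteristic MGT gain appears. With $u_1=u_2=g=0$ and $u_0\in X_\lambda$ one has $\xi=-b\Delta u_0=-b(\cA^2+I)u_0\in X_{\lambda-2}$, using $A=\cA^2$ and $\Delta=A+I$. A naive superposition of the $u_0$-row and the $\xi$-row (with $\nu=\lambda-2$) of Table~\ref{tableGENEresu} would only produce $C^1([0,T];X_{\lambda-1})$ and $C^2([0,T];X_{\lambda-2})$, one unit short of the claim. The gain comes from a cancellation inside the affine term $H(t)$ of \eqref{eq:defiH}. Substituting $\xi=-b(\cA^2+I)u_0$, the $\cA^2$ factor combines with the prefactor $\frac{1}{\sqrt b}\cA^{-1}$ to produce $-\sqrt b\,\cA$ acting under the convolution, i.e.\ a term $-\sqrt b\,\cA\int_0^t R_-(\sqrt b(t-s))h_2(s)u_0\,ds$. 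Because $h_2$ is smooth with $h_2(0)=F(0)=1$, integrating by parts through $-\sqrt b\,\cA R_-(\sqrt b(t-s))=\frac{d}{ds}R_+(\sqrt b(t-s))$ extracts exactly $-R_+(\sqrt b t)u_0$, which annihilates the direct cosine term $R_+(\sqrt b t)u_0$ carried by the initial datum $u_0$ in \eqref{eq:defiH}.

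After this cancellation the surviving contributions to $H(t)$ are either $\cA^{-1}$-smoothed, hence valued in $X_{\lambda+1}$, or of the form (smooth scalar)$\,\cdot\,u_0$, hence valued in $X_\lambda$; so $H\in C([0,T];X_\lambda)$. The crucial point is that the time derivatives do not degrade: differentiating a $\cA^{-1}R_-(\sqrt b t)$ factor yields $\frac{d}{dt}R_-(\sqrt b t)=\sqrt b\,\cA R_+(\sqrt b t)$, so the emerging $\cA$ is absorbed by the $\cA^{-1}$ already present; differentiating the convolutions produces no boundary term because $R_-(0)=0$; and one further integration by parts, now using the still-smooth $h_2'$ and $h_2''$, keeps the worst term of the form (scalar)$\,\cdot\,u_0$. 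Carrying this out once for $H_t$ and once for $H_{tt}$ gives $H,H_t\in C([0,T];X_\lambda)$ and $H_{tt}\in C([0,T];X_{\lambda-1})$. The affine terms $H_1=L(t)v_0+H_t$ and $H_2$ of the Volterra equations \eqref{eq:deriPRIMA}--\eqref{eq:deriSEconda} for $v_t,v_{tt}$ inherit the same regularity, since the extra piece $L(t)v_0$ is $\cA^{-1}$-regularized by \eqref{eq:defiL}. Lemma~\ref{lemmaVOLTE} then transfers this to $(v,v_t,v_{tt})$ and hence to $(u,u_t,u_{tt})$, giving the first row of Table~\ref{tableMGTresu}. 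I expect the main obstacle to be precisely this cancellation: it is the sole source of the ``gain of one unit in space for the time derivative'' and it is special to the MGT relation $\xi=u_2-b\Delta u_0$, disappearing for a generic parameter $\xi$.
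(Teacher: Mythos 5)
Your proposal is correct and takes essentially the same route as the paper's own proof: rows two through four are read off from Table~\ref{tableGENEresu} (with the $H^2$-to-$C^2$ upgrade since $F=e^{-\alpha t}\in C^2$), and the first row hinges on exactly the paper's mechanism, namely substituting $\xi=u_2-b\Delta u_0$ into $H(t)$, splitting $\Delta u_0=Au_0+u_0$, and integrating by parts (using $h_2(0)=1$) so that the extracted term $-R_+(\sqrt{b}\,t)u_0$ cancels the direct cosine term in \eqref{eq:defiH}. Your accounting of the derivatives---$H,H_t\in C([0,T];X_\lambda)$, $H_{tt}\in C([0,T];X_{\lambda-1})$, then transfer to $(v,v_t,v_{tt})$ via Lemma~\ref{lemmaVOLTE} and to $u$ by the harmless factor $e^{-\frac{\gamma}{2}t}$---matches the paper's conclusion \eqref{e:anomaly}.
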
 
\begin{proof}
Along the lines of the first steps of the proof of Theorem~\ref{t:main_1}, we return to
the Volterra equation \eqref{eq:represent} and note that the affine term $H(t)$ in
\eqref{eq:defiH} must be rewritten taking into account that in the present case
we have $\xi=u_2-b\Delta u_0$.
Let us focus on the last summand of $H(t)$, that is 
\begin{equation*}
\frac{1}{\sqrt{b}} \cA^{-1}\int_0^t R_-(\sqrt{b}(t-s))
\big[h_2 (s)\big(u_2-b\Delta u_0\big)+ h_1(s)u_1+h_0(s)u_0\big]\, ds\,,
\end{equation*}
and more specifically on the term
\begin{equation*}
T(t)=\frac{1}{\sqrt{b}} \cA^{-1}\int_0^t R_-(\sqrt{b}(t-s))h_2 (s)[u_2-b\Delta u_0]\,ds\,.
\end{equation*}
We rewrite 
\begin{equation}\label{e:t}
\begin{split}
T(t)&=\underbrace{\frac{1}{\sqrt{b}} \cA^{-1}\int_0^t R_-(\sqrt{b}(t-s))h_2 (s)u_2\,ds}_{T_1(t)}-
\\[1mm]
& \qquad \underbrace{-\sqrt{b} \cA^{-1}\int_0^t R_-(\sqrt{b}(t-s))h_2 (s)\Delta u_0\,ds}_{T_2(t)}
\end{split}
\end{equation}
and compute 
\begin{equation}\label{e:t_2}
\begin{split}
T_2(t)&=-\sqrt{b} \cA^{-1}\int_0^t R_-(\sqrt{b}(t-s))h_2 (s)\big(\Delta u_0-u_0+u_0\big)\,ds=
\\[1mm]
&=\underbrace{-\sqrt{b} \cA^{-1}\int_0^t R_-(\sqrt{b}(t-s))h_2 (s)
Au_0\,ds}_{T_{21}(t)}-
\\[1mm]
& \myspace
\underbrace{-\sqrt{b} \cA^{-1}\int_0^t R_-(\sqrt{b}(t-s))h_2 (s)u_0\,ds}_{T_{22}(t)}\,.
\end{split}
\end{equation}
Assuming $u_0\in X_\lambda$, then $Au_0\in X_{\lambda-2}$; moreover, recall that
$A=\cA^2$, and the relation between the operators $R_-(\cdot)$ and $R_+(\cdot)$.
Then proceed with the computations, integrating by parts to get  
\begin{equation}\label{e:t_21}
\begin{split}
T_{21}(t)& =-\sqrt{b} \cA^{-1}\int_0^t R_-(\sqrt{b}(t-s))h_2 (s)Au_0\,ds
\\[1mm]
& = -\sqrt{b} \cA\int_0^t R_-(\sqrt{b}(t-s))h_2 (s)u_0\,ds=
\\[1mm]
& =\int_0^t \frac{d}{ds} \big\{R_+(\sqrt{b}(t-s)) h_2 (s)u_0\Big\}\,ds
- \int_0^t R_+(\sqrt{b}(t-s))h_2'(s)u_0\,ds=
\\[1mm]
& =h_2(t)u_0-R_+(\sqrt{b}(t))h_2(0)u_0
-\int_0^t R_+(\sqrt{b}(t-s))h_2'(s)u_0\,ds\,.
\end{split}
\end{equation} 

Combine \eqref{e:t_21} with \eqref{e:t_2} and \eqref{e:t}, insert the resulting expression of 
$T(t)$ in $H(t)$, to obtain
\begin{equation*}
\begin{split}
H(t)& =\cancel{R_+(\sqrt{b}t)u_0}-\frac{R_0(0)}{2\sqrt{b}}\cA^{-1} R_-(\sqrt{b}t)u_0
+\frac{1}{\sqrt{b}}\cA^{-1} R_-(\sqrt{b}t) u_1-
\\[1mm]
& \qquad -\sqrt{b} \cA\int_0^t R_-(\sqrt{b}(t-s)) G\,e^{-\frac{1}{2}R_0(0)s} g(s)\,ds+
\\[1mm]
& \qquad
+\frac{1}{\sqrt{b}} \cA^{-1}\int_0^t R_-(\sqrt{b}(t-s))h_2 (s)u_2\, ds+
\\[1mm]
& \qquad
+ h_2(t) u_0\cancel{- R_+(\sqrt{b}t)u_0}
- \int_0^t R_+(\sqrt{b}(t-s))h_2'(s)u_0\, ds-
\\[1mm]
& \qquad
- \sqrt{b} \cA^{-1}\int_0^t R_-(\sqrt{b}(t-s))h_2 (s)u_0\,ds+
\\[1mm]
& \qquad
+ \frac{1}{\sqrt{b}} \cA^{-1}\int_0^t R_-(\sqrt{b}(t-s))\big[h_1(s)u_1+h_0(s)u_0\big]\,ds\,,
\end{split}
\end{equation*}
where the term $R_+(\sqrt{b}t)u_0$ appears twice with opposite signs, and hence cancel. 

Rearranging the summands and replacing $t-s$ with $s$ in the integrals we attain 
\begin{equation}\label{e:h}
\begin{split}
H(t)& =\Big(h_2(t) -\frac{R_0(0)}{2\sqrt{b}}\cA^{-1} R_-(\sqrt{b}t)\Big) u_0
- \int_0^t R_+(\sqrt{b}s)h_2'(t-s)u_0\, ds+
\\[1mm]
& \qquad
+ \cA^{-1}\int_0^t R_-(\sqrt{b}s)
\Big(\frac{1}{\sqrt{b}} h_0(t-s)-\sqrt{b} h_2 (t-s)\Big)u_0\,ds+
\\[1mm]
& \qquad
+\frac{1}{\sqrt{b}}\cA^{-1} R_-(\sqrt{b}t) u_1
+ \frac{1}{\sqrt{b}} \cA^{-1}\int_0^t R_-(\sqrt{b}s)h_1(t-s)u_1\,ds-
\\[1mm]
& \qquad
+\frac{1}{\sqrt{b}} \cA^{-1}\int_0^t R_-(\sqrt{b}s)h_2 (t-s)u_2\, ds \\[1mm]
& \qquad -\sqrt{b} \cA\int_0^t R_-(\sqrt{b}(t-s)) G\,e^{-\frac{1}{2}R_0(0)s} g(s)\,ds\,,
\end{split}
\end{equation}
which allows the understanding of the regularity of $H(t)$, along with the sought regularity properties of solutions to the MGT equation.
 
\smallskip
Notice first that in comparison with the general model equation with memory \eqref{e:memory}
the space regularity of $H(t)$ is not improved, owing to the presence of the term $h_2(t)u_0$. 
Instead, the regularity of $H_t(t)$ is improved thanks to the cancellation of the
term $R_+(\sqrt{b}t)u_0$: in fact, if $g\equiv 0$, $u_1=u_2=0$, $u_0\in X_\lambda$,
then $H_t\in C([0,T];X_\lambda)$, while $H_{tt}\in C([0,T];X_{\lambda-1})$.
However, the said cancellation (of a term depending only on $u_0$) has no effect on the remaining terms: the dependence on $u_1$ and $u_2$ is subject to the smoothing effect already described in Table~\ref{tableGENEresu} (in terms of $u_1$ and $\xi$). 
Thus, the results displayed in Table~\ref{tableMGTresu} follow. 
(The cancellation has also another significant effect: the summand $h_2(t)u_0$ decays in time, but does not propagate in space, as explained in Remark~\ref{Rem:nonpropagazione}.  
Observe that in the term
\begin{equation*}
\frac{1}{\sqrt{b}} \cA^{-1}\int_0^t R_-(\sqrt{b}(t-s))h_2 (s)u_2\, ds
\end{equation*}
one may integrate by parts, thereby obtaining
\begin{equation*}
\begin{split}
&\frac{1}{\sqrt{b}} \cA^{-1}\int_0^t R_-(\sqrt{b}(t-s))h_2 (s)u_2\, ds
= -\frac{1}{b} \cA^{-2}\Big\{h_2 (t)u_2-R_+(\sqrt{b}(t)h_2 (0)u_2
\\[1mm]
&  \myspace 
+ \int_0^t R_+(\sqrt{b}(t-s))h_2' (s)u_2\, ds\Big\}
\end{split}
\end{equation*}
that confirms the said smoothing effect.  

Using once more that the functions $h_i(t)$, $i=0,1,2$ are twice differentiable, 
it is easily seen that when $u_0\in X_\lambda$, $u_1=u_2=0$, $g=0$, then 
\begin{equation}\label{e:anomaly}
H(t)\in  C([0,T];X_\lambda)\cap C^1([0,T];X_\lambda)\cap C^2([0,T];X_{\lambda-1})\,;
\end{equation}
the regularity of $v$ and the one of $u$ follow accordingly.
In conclusion, the   representation  \eqref{e:h} of  $H(t)$ shows that all the regularity results
summarized in the rows of Table~\ref{tableGENEresu} remain valid, with the exception
of those in the first row, that are improved consistently with \eqref{e:anomaly}.
\end{proof}

\begin{remark}
\begin{rm}
We note that in particular the regularity of the mapping 
\begin{equation*}
g \longmapsto (u,u_t,u_{tt}) \qquad \text{(assuming $u_0=u_1=u_2=0$)}
\end{equation*}
for the Moore-Gibson-Thompson equation is the same as in the case of the wave equation
(as well as of the equation with memory \eqref{e:memory}). 
Hence, the last row in Table~\ref{tableMGTresu}, explicitly stated in {\em iii)} of
Theorem~\ref{t:sample}, follow readily from Theorem~\ref{t:tataru}.
\end{rm}
\end{remark}

\begin{remark}\label{Rem:nonpropagazione}
\begin{rm}
We note that $R_-(\sqrt{b}t)u_0$ and $R_+(\sqrt{b}t)u_0$ solve the wave equation, and so
the `shape' of $u_0$ is propagated in space, as in the wave equation.
Instead, the term $h_2(t)u_0$ (which decays exponentially in time) is a stationary wave
and does not propagate in the space variables.
\\
Thanks to the formulas for the solutions of the Volterra integral equations, this stationary wave appears also in the solution of the MGT equation. 
\end{rm}
\end{remark}


\section{Boundary regularity} \label{s:traces}
In this Section we establish a sharp regularity result for the normal trace
on $\Gamma=\partial \Omega$ of solutions to the the MGT equation \eqref{e:mgt},
supplemented with (homogeneous) Dirichlet boundary condition.
This result, presented as Corollary~\ref{c:traces-mgt}, follows from a boundary regularity
result pertaining to the family of wave equations with memory \eqref{e:memory}, depending
on $\xi\in L^2(\Omega)$, that is Theorem~\ref{t:traces-memory} below.
In doing so we re-obtain, in the case $\xi=0$, a result established only recently in
\cite{loreti-sforza_parma} via multiplier techniques, that is Theorem~1.1 therein.

We point out that the present approach to the analysis of wave equations with
memory enable us to pinpoint the boundary (beside the interior) regularity of solutions
in a direct and straightforward way.
The tools employed are the ones of operator and semigroup theories, along with the
view of Volterra equations; the regularity results for wave equations already available in the literature play a crucial role. 
\\
Thus, our method of proof paves the way for the derivation of appropriate boundary regularity 
results for the model equation with memory \eqref{e:memory}, as well as for the MGT equation
\eqref{e:mgt}, when supplemented with Neumann boundary conditions (a case which is known
to be drastically more difficult for the wave equation itself).
These are---to the authors' knowledge---both open problems (the former, even in the case $\xi=0$). 
 
\smallskip
Let the operator $\cT$ be the Dirichlet trace on $\Gamma = \partial \Omega$, and let 
$G=G_D$ be the Green map defined by \eqref{e:green-map} accordingly.
Then, an elementary computation which utilizes the (second) Green Theorem yields,
for $\phi\in D(A)$, the following trace result:
\begin{equation}\label{e:basic-trace-d}
G^*A\phi=-\frac{\partial \phi}{\partial\nu}\Big|_\Gamma   \qquad \forall \phi\in D(A)\,;
\end{equation}
see, e.g., \cite[Vol.~I, p.~181]{las-trig-book}.

We begin by recalling the (by now well known) result
on the boundary traces of the wave equation:

\begin{theorem}[\cite{las-lions-trig}] \label{t:traces-waves}
Let $u=u(t,x)$ be a solution to the initial/boundary value problem \eqref{e:ibvp-wave}
for the wave equation with homogeneous boundary data (i.e. $g=0$).  
Then, for every $T>0$ there exists $M=M_T$ such that
\begin{equation*}
\int_0^T\!\!\!\int_{\partial\Omega} \Big|\frac{\partial}{\partial\nu} u(x,t)\Big|^2 
d\sigma\,d t \le 
M\, \Big( \|u_0\|_{H^1_0(\Omega)}+\|u_1|_{L^2(\Omega)}^2 +\|f\|^2_{L^1(0,T;L^2(\Omega))}\Big)\,. 
\end{equation*}

\end{theorem}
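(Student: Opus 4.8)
The plan is to prove this \emph{hidden regularity} estimate by the classical differential multiplier (vector field) method. This is the appropriate route because the normal trace $\partial u/\partial\nu$ of a merely finite-energy solution is not furnished by the standard trace theorem, and so the $L^2(\Gamma)$-bound must be extracted from a dedicated energy identity. First I would fix a vector field $h\in C^1(\overline\Omega;\mathbb{R}^n)$ with $h=\nu$ on $\Gamma$; such an extension exists precisely because $\Gamma$ is of class $C^2$, so that $\nu$ is $C^1$ along $\Gamma$ and extends to a $C^1$ field on $\overline\Omega$. All computations would first be carried out for smooth data $(u_0,u_1,f)$, for which $u$ is regular enough to justify every integration by parts; the resulting estimate is then propagated to the energy-level data $u_0\in H^1_0(\Omega)$, $u_1\in L^2(\Omega)$, $f\in L^1(0,T;L^2(\Omega))$ by a density argument.

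Next I would multiply the equation $u_{tt}-\Delta u+u=f$ by $h\cdot\nabla u$ and integrate over $Q=(0,T)\times\Omega$, integrating by parts in time in the term containing $u_{tt}$ and by the Green identity in the term containing $\Delta u$. The boundary contributions are where the homogeneous Dirichlet condition $u|_\Gamma=0$ is decisive: it forces $u_t|_\Gamma=0$ and makes the tangential gradient of $u$ vanish on $\Gamma$, so that $\nabla u=(\partial u/\partial\nu)\,\nu$ there, whence $h\cdot\nabla u=(h\cdot\nu)\,\partial u/\partial\nu$ on $\Gamma$. Collecting all terms, the only surviving boundary contribution is proportional to $\int_0^T\!\!\int_\Gamma|\partial u/\partial\nu|^2$, which can be isolated on one side of the identity; everything else consists of interior integrals of the quadratic quantities $u_t^2$, $|\nabla u|^2$, $u^2$ (with continuous coefficients built from $h$, $\nabla\!\cdot h$ and the $\partial_j h_k$), together with the time-endpoint terms $\int_\Omega u_t\,(h\cdot\nabla u)\,dx$ at $t=0$ and $t=T$, and the forcing term $\int_Q f\,(h\cdot\nabla u)$.

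Finally I would control the right-hand side by the natural energy $E(t)=\tfrac12\int_\Omega\big(u_t^2+|\nabla u|^2+u^2\big)\,dx$. The standard energy identity—multiply the equation by $u_t$ and use $u_t|_\Gamma=0$—yields $\sup_{[0,T]}E(t)\le C\big(E(0)+\|f\|_{L^1(0,T;L^2)}^2\big)$, with $E(0)\simeq\|u_0\|_{H^1_0(\Omega)}^2+\|u_1\|_{L^2(\Omega)}^2$. Each interior integral and each time-endpoint term is then dominated by $C_T\sup_{[0,T]}E$, while the forcing term is bounded by $C\,(\sup_{[0,T]}E)^{1/2}\,\|f\|_{L^1(0,T;L^2)}$ and absorbed through Young's inequality. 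This delivers the asserted bound.

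I expect the principal obstacle to be neither the construction of $h$ nor the energy bound, but the bookkeeping of the multiplier identity: one must verify that, after the two integrations by parts, the $|\partial u/\partial\nu|^2$ boundary term survives with net coefficient $\tfrac12\,(h\cdot\nu)\equiv\tfrac12$—hence nonvanishing and sign-definite—whereas every other boundary contribution is annihilated by the homogeneous Dirichlet condition. The accompanying approximation argument, which legitimizes the formal computation when the data live only at the energy level (so that $\partial u/\partial\nu\in L^2(\Gamma)$ is \emph{a priori} not defined), is the other point requiring care.
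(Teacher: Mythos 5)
Your multiplier argument is correct, and it coincides with the proof of the quoted source: the paper itself gives no proof of this theorem, citing it from \cite{las-lions-trig}, where the estimate is established by exactly the computation you describe --- the $h\cdot\nabla u$ multiplier with $h=\nu$ on $\Gamma$, the observation that the Dirichlet condition forces $\nabla u=(\partial u/\partial\nu)\,\nu$ and $u_t=0$ on $\Gamma$ so the sole surviving boundary term is $\tfrac12\int_0^T\!\!\int_\Gamma (h\cdot\nu)\,|\partial u/\partial\nu|^2$, control of the interior and endpoint terms by the energy, and the final density argument that gives meaning to the normal trace at the energy level. Nothing essential is missing.
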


\smallskip
We now see that this property is inherited by the solutions to the equation with memory
\eqref{e:memory}, and next by the solutions to the MGT equation \eqref{e:mgt}, provided a suitable compatibility condition for initial data is satisfied; see \eqref{e:compatibility} below.
The first precise statement is as follows. ({\em Cf.} \cite[Theorem~1.1]{loreti-sforza_parma}
for the case $\xi=0$.)

\begin{theorem}\label{t:traces-memory}
Under the standing Hypotheses~\ref{a:kernel-and-affine} and assuming $\xi\in L^2(\Omega)$,
let $u=u(t,x)$ be a solution to the equation with memory \eqref{e:memory},
with initial data $(u_0,u_1)$ and homogeneous boundary data.
Then, for every $T>0$ there exists $M=M_T$ such that 
\begin{equation}\label{e:traces-memory}
\int_0^T\!\!\!\int_{\partial\Omega} \Big|\frac{\partial}{\partial\nu} u(x,t)\Big|^2 
d\sigma\,d t
\le M\, \Big(\|u_0\|_{H^1_0(\Omega)}+\|u_1|_{L^2(\Omega)}^2 +\|\xi\|^2_{L^2(\Omega)}\Big)\,. 
\end{equation}
\end{theorem}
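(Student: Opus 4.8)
The plan is to exploit the reduction carried out in Section~\ref{sec:memory}: the MacCamy trick recasts the equation with memory \eqref{e:memory} as a genuine wave equation for the rescaled unknown $v(t)=e^{-\frac12 R_0(0)t}u(t)$, driven by a forcing term into which the memory has been absorbed. Once $v$ is seen to solve a standard wave equation with an $L^1(0,T;L^2(\Omega))$ right-hand side and homogeneous Dirichlet data, the sharp boundary trace estimate of Theorem~\ref{t:traces-waves} applies, and transferring it back to $u$ yields \eqref{e:traces-memory}.

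Concretely, $v$ satisfies \eqref{eq:MEMORY-for-v}, i.e.
\[
v_{tt}=b(\Delta v-v)+f(t)\,,\qquad f(t):=\int_0^t K(t-s)v(s)\,ds+\beta v(t)+h_2(t)\xi+h_1(t)u_1+h_0(t)u_0\,,
\]
with $\cT v=0$ (as $g\equiv0$) and $v_0=u_0$, $v_1=u_1-\tfrac12 R_0(0)u_0$ by \eqref{eq:datiUdatiV}. First I would record the interior regularity of $v$. Under the present hypotheses $u_0\in H^1_0(\Omega)=X_1$ and $u_1,\xi\in L^2(\Omega)=X_0$, so superposing the first three rows of Table~\ref{tableGENEresu} (with $\lambda=1$, $\mu=0$, $\nu=0$) Theorem~\ref{t:main_1} gives $v\in C([0,T];X_1)\cap C^1([0,T];X_0)$ together with $\|v\|_{C([0,T];X_1)}\le M(\|u_0\|_{X_1}+\|u_1\|_{X_0}+\|\xi\|_{X_0})$.

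Next I would check that $f\in L^1(0,T;L^2(\Omega))$ with norm bounded by the right-hand side of \eqref{e:traces-memory}. Indeed $\beta v\in C([0,T];X_0)$; the convolution $K*v$ belongs to $C([0,T];X_0)$ since $K\in L^2(0,T)$ (see \eqref{e:various-functions}) and $v\in C([0,T];X_0)$; the term $h_2(t)\xi$ lies in $L^2(0,T;X_0)$ because $h_2\in L^2(0,T)$ and $\xi\in L^2(\Omega)$; and $h_1(t)u_1$, $h_0(t)u_0$ lie in $L^\infty(0,T;X_0)$ since $h_1\in H^2(0,T)$, $h_0\in H^1(0,T)$ both embed into $L^\infty(0,T)$. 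Combining these with the interior bound for $v$ gives $\|f\|_{L^1(0,T;L^2)}\le M(\|u_0\|_{H^1_0}+\|u_1\|_{L^2}+\|\xi\|_{L^2})$. Modulo a harmless rescaling of the time variable that normalizes the coefficient of $\Delta-I$ to match \eqref{e:ibvp-wave} (at the price of a constant depending on $b$ and $T$), Theorem~\ref{t:traces-waves} applied to $v$ (noting $v_0\in H^1_0(\Omega)$, $v_1\in L^2(\Omega)$, with $\|v_0\|_{H^1_0}=\|u_0\|_{H^1_0}$ and $\|v_1\|_{L^2}\le\|u_1\|_{L^2}+C\|u_0\|_{H^1_0}$) bounds $\int_0^T\!\!\int_{\partial\Omega}\big|\frac{\partial v}{\partial\nu}\big|^2\,d\sigma\,dt$ by $M(\|u_0\|^2_{H^1_0}+\|u_1\|^2_{L^2}+\|\xi\|^2_{L^2})$. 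Finally, since $\frac{\partial v}{\partial\nu}=e^{-\frac12 R_0(0)t}\frac{\partial u}{\partial\nu}$ and the scalar factor is bounded above and below on $[0,T]$, the two boundary integrals are comparable, and \eqref{e:traces-memory} follows.

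The main obstacle, and the reason the interior theory must be invoked before the trace estimate, is that $f$ is not a datum but depends on the unknown $v$ itself through $\beta v$ and the convolution $K*v$; hence Theorem~\ref{t:traces-waves} cannot be applied directly. One must first close the loop via Theorem~\ref{t:main_1}, which controls $v$ in $C([0,T];X_0)$ (indeed in $C([0,T];X_1)$), thereby turning the implicit forcing into an explicit $L^1(0,T;L^2(\Omega))$ term with the correct linear dependence on $(u_0,u_1,\xi)$. The only other point requiring care is the bookkeeping of the lower-order terms $h_i$, whose time regularity from \eqref{e:various-functions} (and in particular $h_2\in L^2(0,T)$, matching precisely $\xi\in L^2(\Omega)$) is exactly what keeps $f$ in $L^1(0,T;L^2(\Omega))$.
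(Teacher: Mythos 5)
Your proposal is correct, but it extracts the trace estimate by a genuinely different mechanism than the paper's proof, even though both start from the same reduction (the MacCamy substitution turning \eqref{e:memory} into the wave equation \eqref{eq:MEMORY-for-v} for $v=e^{-\frac12 R_0(0)t}u$) and both must first close the loop on the unknown-dependent right-hand side via the interior theory, exactly as you stress: Theorem~\ref{t:main_1} gives $v\in C([0,T];X_1)$ with continuous dependence on $(u_0,u_1,\xi)$. The difference is what happens next. You bundle the entire right-hand side $K*v+\beta v+h_2\xi+h_1u_1+h_0u_0$ into a single forcing $f\in L^1(0,T;L^2(\Omega))$ and invoke Theorem~\ref{t:traces-waves} once, in its full strength including the $\|f\|_{L^1(0,T;L^2(\Omega))}$ slot. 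The paper instead decomposes the cosine-operator variation-of-parameters formula for $v$ and estimates each summand's normal trace separately: the free evolution via Theorem~\ref{t:traces-waves}; the term $\cA^{-1}\int_0^t R_-(t-s)h_2(s)\xi\,ds$ by representing $\frac{\partial}{\partial\nu}$ as $G^*A$ (identity \eqref{e:basic-trace-d}), proving the direct inequality $\big\|\frac{\partial}{\partial\nu}\big[\cA^{-1}R_-(\cdot)\xi\big]\big\|_{L^2(0,T;L^2(\Gamma))}\le M\|\xi\|_{L^2(\Omega)}$ first for $\xi\in D(A)$ and extending by density, then applying Young's convolution inequality in time; and the $v$-dependent convolution by observing it gains enough space regularity to be a continuous $D(A)$-valued function, whose normal trace is then classical. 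Your single application of the $f$-dependent trace theorem buys brevity and dispenses with the density/Young machinery; the paper's summand-by-summand treatment buys finer information (for instance, the memory contribution to the trace is continuous in time into $L^2(\Gamma)$) and isolates the convolution-plus-direct-inequality template that the authors evidently intend to reuse in settings, such as Neumann boundary conditions, where a trace estimate with an $L^1(0,T;L^2(\Omega))$ forcing term of comparable strength is not available. The two technical points you flag are handled consistently with the paper: your time rescaling normalizing the coefficient $b$ matches the paper's own simplification of setting $b=1$, and the transfer $\frac{\partial v}{\partial\nu}=e^{-\frac12 R_0(0)t}\frac{\partial u}{\partial\nu}$, with the scalar factor bounded above and below on $[0,T]$, is exactly how the estimate passes back to $u$.
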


\begin{proof}
Since the equation is supplemented with Dirichlet boundary conditions, with may take
$A=\Delta$, with domain $H^2(\Omega)\cap H^1_0(\Omega)$. 
The estimate \eqref{e:traces-memory} is obtained as a simple consequence of the 
boundary regularity of solutions to the equation with memory in \eqref{ibvp-for-v},
whose convolution term was dispensed with differential terms.
Rewrite the equation in \eqref{ibvp-for-v} as
\begin{equation*}
v_{tt}=\Delta v+k_0v+\int_0^t K(t-s)v(s) ds+\cF(t)\,,
\end{equation*}
where we have set $b=1$ for the sake of simplicity (recall that $b$ must be positive),
while $\cF(t)$ is now
\begin{equation*}
\cF(t):=\big(h_2(t)\xi+h_1(t)u_1+ h_0(t) u_0\big)\,,
\end{equation*}
with the scalar functions $h_i(\cdot)$, $i=0,1,2$ introduced in \eqref{e:various-functions}.
It then follows that 
\begin{equation*}
\begin{split}
& v(t)=\underbrace{R_+(t)v_0+\cA^{-1}R_-(t)v_1}_{=: u(t)}+
\\
& \myspace + \cA^{-1}\int_0^t  R_-(t-s)\Big [k_0 v(s)+\int_0^s K(s-r) v(r) dr\Big]\,ds+
\\[1mm]
& \myspace \qquad +\,\underbrace{\cA^{-1}\int_0^t R_-(t-s) \cF(s)\,ds}_{=:T(t)}\,.
\end{split}
\end{equation*}
First of all we note that Theorem~\ref{t:traces-waves} is valid for the function $u(t)$.
Next, we observe that the integral term $T(t)$ depends on $\xi$, as the function $\cF(t)$ does.
Let us examine this dependence.
%
%
%
We recall the following version of Young inequality: given a Hilbert space $H$, if 
$h\in L^1(0,T;\mathbb{R})$ and $X\in L^2(0,T;H)$ then the convolution $X*h$ satisfies 
\begin{equation*} 
\|X*h\|_{L^2(0,T;H)}\le \|X\|_{L^2(0,T;X)}\,\|h\|_{L^1(0,T;\mathbb{R})}\,.
\end{equation*}
Then, assuming $\xi\in D(A)$, we obtain
\begin{align*}
&
\frac{\partial }{\partial \nu}  \cA^{-1}\int_0^t  R_-(t-s) h_2(s) \xi\,ds =D^*A\left [\cA^{-1}\int_0^t  R_-(t-s) h_2(s) \xi\,ds \right ]=
\\[1mm]
& \myspace =\int_0^t h_2(t-s)X(s)\, ds\,,
\end{align*}
where we set 
\begin{equation*}
X(t):= D^*A\left [\cA^{-1} R_-(t)\xi\right ]=\frac{\partial }{\partial \nu} 
\left [\cA^{-1} R_-(t)\xi\right ]\,.
\end{equation*}
Then, the (direct) inequality pertaining to wave equation establishes
\begin{equation*}
\Big\|\frac{\partial }{\partial \nu} \left [\cA^{-1} R_-(t)\xi\right ]\Big\|_{L^2(0,T;L^2(\Gamma))} \le M\|\xi\|_{L^2(\Omega)}\,,
\end{equation*}
which is extended by continuity to every $\xi\in L^2(\Omega)$.
Young inequality then gives
\begin{equation*} 
\Big \| \int_0^\cdot h_2(\cdot-s)X(s)\, ds\Big\|\le M\|\xi\|_{L^2(\Omega)}\,.
\end{equation*}
The remaining summands within $T(t)$, depending on $u_0$ and $u_1$, are continuous 
$D(A)$-valued functions, too.

Therefore, the normal trace of $v$ reads as  
\begin{align*}
&\frac{\partial}{\partial \nu} v(t)\Big|_\Gamma=-G^*A v(t)=
-G^*A\Big[u(t)+\cA^{-1}\int_0^t  R_-(t-s) \cF(s)\,ds\Big]\,ds-
\\[1mm]
& \qquad\qquad -G^*A \Big[\cA^{-1}\int_0^t R_-(t-s)\Big(k_0 v(s)+\int_0^s K(s-r) v(r) dr\Big)\,ds\Big] 
\end{align*}
and we see that there exists $M>0$ such that
\begin{align}
& \left \|-G^*A\Big[u(t)+\cA^{-1}\int_0^t  R_-(t-s) \cF(s)\,ds\Big]\,ds\right \|^2_{L^2(0,T;L^2(\Gamma))}\le
\nonumber \\[1mm]
& \myspace \le M
\left (\|u_0\|^2_{H^1_0(\Omega)}+\|u_1\|^2_{L^2(\Omega)} +\|\xi\|^2_{L^2(\Omega)}\right )\,.
\label{e:to-be-combined}
\end{align}
A similar inequality is valid for the second summand.
In fact, we know ({\em cf.} the second statement of Theorem~\ref{t:sample}) that 
$v\in C([0,T];H^1_0(\Omega))$, with continuous dependence on initial data.
Therefore, the second summand satisfies
\begin{equation*}
G^*A \Big[\cA^{-1}\int_0^t R_-(t-s)\Big(k_0 v(s)+\int_0^s K(s-r) v(r) dr\Big)\,ds\Big]
\in C(0,T;L^2(\Gamma))\,,
\end{equation*}
which combined with \eqref{e:to-be-combined} implies the sought estimate
\eqref{e:traces-memory}
\end{proof}

For the MGT equation, one obtains readily the following result.

\begin{corollary}\label{c:traces-mgt}
Let $u=u(t,x)$ be a solution to the Moore-Gibson-Thompson equation \eqref{e:mgt},
with initial data $(u_0,u_1,u_2)$ and homogeneous boundary data.
Assume $(u_0,u_1,u_2)\in H^1_0(\Omega)\times L^2(\Omega)\times H^{-1}(\Omega)$,
along with the compatibility condition
\begin{equation}\label{e:compatibility}
\xi=u_2-\Delta u_0\in L^2(\Omega)\,.
\end{equation}
Then, for every $T>0$ there exists $M=M_T$ such that
\begin{equation*} 
\int_0^T\!\!\!\int_{\partial\Omega} \Big|\frac{\partial}{\partial\nu} u(x,t)\Big|^2 
d\sigma\,d t\le 
M\, \Big( \|u_0\|_{H^1_0(\Omega)}+\|u_1\|_{L^2(\Omega)}^2 +\|u_2-\Delta u_0\|^2_{L^2(\Omega)}\Big)\,. 
\end{equation*}
\end{corollary}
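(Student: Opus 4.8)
The plan is to read off the result as a direct specialization of Theorem~\ref{t:traces-memory}, since the MGT equation is nothing but a particular instance of the wave equation with memory \eqref{e:memory}. First I would recall from Section~\ref{sec:memory} that, via MacCamy's trick \cite{maccamy_1977}, the initial/boundary value problem \eqref{e:mgt}-\eqref{e:IC}-\eqref{e:BC} coincides---in the sense of Definition~\ref{d:def-solution}---with problem \eqref{e:memory}-\eqref{eq:dataDIe:memory} for the choice $N(t)=F(t)=e^{-\alpha t}$ and $\xi=u_2-b\Delta u_0$; see \eqref{eq:MGTriscritta}. Adopting the normalization $b=1$ used in the proof of Theorem~\ref{t:traces-memory}, the affine parameter becomes precisely $\xi=u_2-\Delta u_0$, so that the two unknowns labelled $u$ in the two statements literally agree, and in particular their normal traces on $\Gamma$ coincide.

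Next I would verify that the hypotheses of Theorem~\ref{t:traces-memory} are met. The standing Hypotheses~\ref{a:kernel-and-affine} hold trivially: $b>0$, the kernel $N(t)=e^{-\alpha t}$ belongs to $H^2(0,T)$, and $F(t)=e^{-\alpha t}\in L^2(0,T)$ for every $T>0$. As for the data, $u_0\in H^1_0(\Omega)$ and $u_1\in L^2(\Omega)$ by assumption. The one genuinely structural point---and the step I would emphasize, even though it requires no computation---is that, although $u_2\in H^{-1}(\Omega)$ and $\Delta u_0\in H^{-1}(\Omega)$ are each only in $H^{-1}$, the compatibility condition \eqref{e:compatibility} is exactly the requirement that their specific combination $\xi=u_2-\Delta u_0$ lie in $L^2(\Omega)$. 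This is precisely the hypothesis $\xi\in L^2(\Omega)$ under which Theorem~\ref{t:traces-memory} is available; thus the compatibility condition is what brings the MGT data within its scope.

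With the identification in place, I would simply invoke Theorem~\ref{t:traces-memory} for the solution $u$, obtaining the estimate \eqref{e:traces-memory}, and then substitute $\xi=u_2-\Delta u_0$ to replace $\|\xi\|^2_{L^2(\Omega)}$ by $\|u_2-\Delta u_0\|^2_{L^2(\Omega)}$; this yields verbatim the asserted bound. I do not expect any serious obstacle, as the corollary is immediate once the specialization is recognized. The only matter warranting a line of care---already handled upstream---is the exponential rescaling $v=e^{\frac{\gamma}{2}t}u$ relating $u$ to the auxiliary unknown $v$ (recall $R_0(0)=-\gamma$ in the MGT case): since $e^{\pm\frac{\gamma}{2}t}$ is bounded on $[0,T]$, the $L^2(0,T;L^2(\Gamma))$ norms of $\frac{\partial}{\partial\nu}u$ and $\frac{\partial}{\partial\nu}v$ are mutually controlled, but as Theorem~\ref{t:traces-memory} is already phrased directly in terms of $u$, this rescaling has been absorbed in its proof and no further argument is needed.
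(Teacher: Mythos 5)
Your proposal is correct and follows essentially the same route as the paper, which states the corollary as an immediate specialization of Theorem~\ref{t:traces-memory} to $N(t)=F(t)=e^{-\alpha t}$, $\xi=u_2-\Delta u_0$ (with $b$ normalized to $1$). Your explicit verification of Hypotheses~\ref{a:kernel-and-affine} and your observation that the compatibility condition \eqref{e:compatibility} is exactly the hypothesis $\xi\in L^2(\Omega)$ of the theorem is precisely the content the paper leaves implicit in ``one obtains readily.''
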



\appendix
\section{Justification of Definition~\ref{d:def-solution}} \label{a:def-solutions}
Let us recall that in order to give a Definition of solutions to the MGT Equation~\eqref{e:mgt}
we proceeded as follows: formal calculations were used to reduce equation \eqref{e:mgt} to the
integro-differential equation \eqref{eq:MEMORY} and then to the Volterra integral equation
\eqref{eq:represent} in the unknown $v$. 
By definition, $u$ solves \eqref{e:mgt} when $v(t)=e^{-(R(0)/2) t} u(t)$ solves the Volterra
integral equation \eqref{eq:represent} (with $g$ replaced by $e^{-(R(0)/2) t}g(t)$). 
In this Appendix we provide a formal justification of the said Definition.
 
The argument is similar to the one used in Section~\ref{s:preliminaries-on-wave} in the
case of wave equations: we prove that the solution $u$ is smooth and can be replaced in 
both the sides of \eqref{e:mgt} 
when the initial data and the control are ``smooth'' and then we use continuous dependence as stated in Table~\ref{tableMGTresu} to justify the definition in general. 
This procedure is a bit more elaborated than the one pertaining to the wave equation,
since the third derivative (in time) comes into the picture, which requires more information
on the solutions of the wave equation.

In order to distinguish the memoryless wave equation from the equation with memory and the MGT equation, we will denote by $u_3$ the solution to equation \eqref{e:ibvp-wave} (this is because we use suitable results from \cite[\S~2.2]{PandLIBRO}, where $u_3$ solves the wave equation
when the initial data and the affine term are zero).
We assume $u_3(0) \in \cD(\Omega)$, $\frac{\partial}{\partial t} u_3\big|_{t=0}\in \cD(\Omega)$, 
$g\in \cD((0,T) \times \partial \Omega)$, where $\cD$ denotes the space of $C^\infty$ 
functions with compact support in the indicated open set (which should not be confused
with the domain of an operator), while $\partial\Omega$ is relatively open respect to itself.
The assumptions on the affine term $F(t)$ are made explicit below.
For the sake of simplicity, 
in the sequel the time derivative will be denoted by $'$.
 
\smallskip
It is known that $u_3$ is given by formula~\eqref{e:waves-explicit}: it is also clear that
if $g, f\equiv 0$, then in view of the Sobolev embedding theorems one has 
$u_3 \in C^\infty((0,T) \times \Omega)$, for every $T>0$.
Our aim is to show that a similar property holds true when $g\ne 0$, $f\ne 0 $. 
\\
Let us study separately the effects of $g$ and $f$: accordingly, we assume first $f=0$,
so that
\begin{equation*}
u_3(t)=-\cA\int_0^t R_-(s) Gg(t-s) \,ds= Gg(t)+ \int_0^t R_+(s) Gg'(t-s)\,ds\,.
\end{equation*}
As already noted we have $u_3(t)-Gg(t)\in \mathcal{D}(A)$ and 
the boundary condition is satisfied; moreover,
\begin{equation*} 
A \left(u_3(t)-Gg(t)\right)=-Gg''(t)+\int_0^t R_+(s) Gg'''(t-s)\, ds\in C^\infty\left ([0,T];L^2(\Omega)\right)\,.
\end{equation*}

Observe that, by definition,
\begin{equation*} 
A \left (u_3(t)-Gg(t)\right )=(\Delta-I) u_3(t)\in C^\infty\left([0,T];L^2(\Omega)\right)
\end{equation*}
that is $u_3(t)\in C^\infty \left([0,T];H^2(\Omega)\right)$ with suitable homogeneous boundary condition.
Analogously,
\begin{equation*} 
\cA\big\{A\big[ u_3(t)-Gg(t)\big]+Gg''(t)\Big\}=\int_0^t R_-(s)Gg^{(4)}(t-s)\, ds
\end{equation*}
which again is of class $C^\infty([0,T];L^2(\Omega))$. So we have
\begin{equation*} 
\cA\big\{A\big[ u_3(t)-Gg(t)\big]+Gg''(t)\Big\}\in C^\infty([0,T];X_1)\,, 
\end{equation*}
that is $u_3\in C^\infty\big([0,T];H^3(\Omega)\big)$.
 
By iteration we see that in the interior of $(0,T) \times\Omega$ the solution $u_3$ is of 
class $C^\infty$ and hence, when computing the derivatives, the order can be interchanged.
  
\smallskip
Let us consider now the effect of the affine term $f(t)$. 
We assume $f\in C^\infty\big([0,T]\times \Omega\big)$ and that for every fixed $t\ge 0$ 
$f(t,\cdot)\in \cD(\Omega)$, and yet possibly $f(0,\cdot)\ne 0$.

The contribution of this affine term is
\begin{equation*} 
u_3(t)=\cA^{-1}\int_0^t R_-(s)f(t-s)\, ds\in C^\infty\big([0,T]\times X_1\big)
\end{equation*}
since $f^{(n)}(0)\in \cD(A^k)$ for every couple of integers $n$ and $k$, so that
\begin{equation*}
u_3(t)\in C^\infty\big([0,T];X_k\big) \quad \text{for every $k$.}
\end{equation*}
In particular, $u_3 \in C^\infty\left( [0,T]\times\Omega\right)$ as above.

\smallskip
We now extend the obtained properties to the solutions $v$ to the Volterra integral equation
\eqref{eq:represent} so that it is possible to track back the computation and to see that equality~\eqref{e:mgt} holds pointwise (when the boundary control and the initial conditions have the stated regularity, $u_2\in \cD(\Omega)$ included).

We confine ourselves to examine the effect of the boundary data $g$ (the effect of initial
data can be examined in a similar way). Moreover, multiplication by $e^{-R(0)t/2}$ does not
affect the desidered results and hence is ignored; i.e. we assume $v(t)\equiv u(t)$.

Because equation \eqref{eq:represent} has the form of equation (2.25) in \cite[\S~2]{PandLIBRO} (the notations are easily adapted, in particular $b$ is substituted by $c^2$ in \cite{PandLIBRO})
following the proof of \cite[Theorem~2.4, item~2]{PandLIBRO} we see that 
$y(t)=v(t)-Gg(t)$ solves 
\begin{equation*}
y(t)=\left(u_3(t)-Gg(t)\right)+\int_0^t L(s)Gg(t-s)\,ds+\int_0^t L(s)y(t-s)\,ds 
\end{equation*}
so that 
\begin{equation*}
\cA y(t)=\int_0^t \cA L(s) Gg(t-s)\, ds+\int_0^t L(s)\cA y(t-s)\, ds
\end{equation*}
(note that $\cA L(t)$ is a continuous operator for every $t$). 
It then follows that $y(t)\in C^\infty\left([0,T];X_1\right)$.

Exploiting the definition of $L(t)$ and integrating by parts the integral which contains 
$g(t)$ we see that $y(t)\in C^\infty\left ([0,T];X_2\right)$. 
Iterating this procedure, we obtain $u\in C^\infty\left([0,T]\times\Omega\right)$.
Using this regularity result we can track back the computation leading to the fact that $u(t)$ solves the MGT equation, including the fact that the Laplacian and the time derivative can be interchanged.

\medskip
\section*{Acknowledgements}
\noindent
The research of F.B. was partially supported by the Universit\`a degli Studi di Firenze
under the Project {\em Analisi e controllo di sistemi di Equazioni a Derivate Parziali di evoluzione}, and by the GDRE (Groupement de Recherche Europ\'een) ConEDP ({\em Control of PDEs}).
F.B. is a member of the Gruppo Nazionale per l'Analisi Mate\-ma\-tica, la Probabilit\`a 
e le loro Applicazioni (GNAMPA) of the Istituto Nazionale di Alta Matematica (INdAM). 
\\
The research of L.P. was partially supported by the Politecnico di Torino, and by the GDRE (Groupement de Recherche Europ\'een) ConEDP ({\em Control of PDEs}).
L.P. is a member of the Gruppo Nazionale per l'Analisi Matematica, la Probabilit\`a 
e le loro Applicazioni (GNAMPA) of the Istituto Nazionale di Alta Matematica (INdAM). 
 
\medskip


\begin{thebibliography}{99}
 
\bibitem{belleni}
{\sc A.~Belleni-Morante},
An integro-differential equation arising from the theory of heat conduction in rigid materials
with memory,
\emph{Boll.~Un.~Mat.~Ital.} {\bf 5} 15-B (1978), 470-482. 

\bibitem{corduneanu}
{\sc C.~Corduneanu},
{\em Integral Equations and Applications},
Cambridge University Press, 2010.

\bibitem{delloro-pata_2016}
{\sc F.~Dell'Oro and V.~Pata},
On the Moore-Gibson-Thompson equation and its relation to viscoelasticity,
{\em Appl.~Math.~Optim.} {\bf 76} (2017), 641-655.

\bibitem{fattorini}
{\sc H.O.~Fattorini},
{\em Second order linear differential equations in Banach spaces},
North-Holland Publishing Co., Amsterdam, 1985.

\bibitem{Grisvard} {\sc P. Grisvard}, 
Controlabilit\'e exacte des solutions de l'\'equation des ondes en pr\'esence de singularit\'es,
{\em J. Math. Pures Appl.} {\bf 68} (1989), 215-259.

\bibitem{jordan_2009}
{\sc P.~Jordan}, 
Nonlinear acoustic phenomena in viscous thermally relaxing fluids: Shock bifurcation
and the emergence of diffusive solitons,
{\em The Journal of the Acoustical Society of America} {\bf 124} (2009), no.~4, 
2491-2491.

\bibitem{kalt-etal_2011}
{\sc B.~Kaltenbacher, I.~Lasiecka and R.~Marchand},
Wellposedness and exponential decay rates for the Moore-Gibson-Thompson equation arising
in high intensity ultrasound, 
{\em Control Cybernet.} {\bf 40} (2011), no.~4, 971-988. 

\bibitem{kalt_2015}
{\sc B.~Kaltenbacher}, 
Mathematics of nonlinear acoustics,
{\em Evolution Equations Control Tehory} {\bf 4} (2015), no.~4, 447-491.

\bibitem{kalt-las-posp_2012}
{\sc B.~Kaltenbacher, I.~Lasiecka and M.~Pospieszalska}, 
Wellposedness and exponential decay of the energy in the nonlinear 
Jordan-Moore-Gibson-Thompson equation arising in high intensity ultrasound, 
{\bf M3AS} 22 (2012), 1250035, 34~pp.

\bibitem{las-jee_2017}
{\sc I.~Lasiecka},
Global solvability of Moore-Gibson-Thompson equation with memory arising in nonlinear
acoustics,
{\em J.~Evol.~Equ.} {\bf 17} (2017), no.~1, 411-441.

\bibitem{las-trig-cos}
{\sc I.~Lasiecka and R.~Triggiani},
A cosine operator approach to $L_2(0,T;L_2(\Gamma))$-boundary input hyperbolic equations,
\emph{Appl. Math. Optim.}~\textbf{7} (1981), 35-93.

\bibitem{las-lions-trig}
{\sc I.~Lasiecka, J.-L.~Lions and R.~Triggiani}, 
Nonhomogeneous boundary value problems for second order hyperbolic operators,
{\em J. Math. Pures Appl.} (9) {\bf 65} (1986), no.~2, 149-192. 

\bibitem{las-trig_wave1}
{\sc I.~Lasiecka and R.~Triggiani},
Sharp regularity theory for second order hyperbolic equations of Neumann type, I.
$L^2$ nonhomogeneous data,
{\em Ann. Mat. Pura Appl.} {\bf 157} (1990), no.~4, 285-367. 

\bibitem{las-trig_wave2}
{\sc I.~Lasiecka and R.~Triggiani},
Regularity theory of hyperbolic equations with nonhomogeneous Neumann boundary conditions, 
II. General boundary data,
{\em J. Differential Equations} {\bf 94} (1991), no.~1, 112-164. 

\bibitem{las-trig-book}
{\sc I.~Lasiecka and R.~Triggiani},
{\em Control theory for partial differential equations: continuous and approximation theories,
I. Abstract parabolic systems; II. Abstract hyperbolic-like systems over a finite time horizon}, 
Encyclopedia of Mathematics and its Applications, {\bf 74},
Cambridge University Press, Cambridge, 2000. xxii+644+I4 pp.

\bibitem{lions_1988}
{\sc J.-L.~Lions}, 
{\em Contr\^olabilit\'e exacte, perturbations et stabilisation de syst\`emes distribu\'es},
Tome 1 [Exact controllability, perturbations and stabilization of distributed systems. Vol. 1] with appendices by E.~Zuazua, C.~Bardos, G.~Lebeau and J.~Rauch,
{\em Recherches en Math\'ematiques Appliqu\'ees} [Research in Applied Mathematics],
Vol.~8, Masson, Paris, 1988. x+541 pp.

\bibitem{loreti-sforza_parma}
{\sc P.~Loreti and D.~Sforza},
Hidden regularity for wave equations with memory, 
{\em Riv.~Mat.~Univ. Parma} {\bf 7} (2016), no.~2, pp.~391-405. 

\bibitem{maccamy_1977}
{\sc R.C.~MacCamy}, 
A model for one-dimensional, nonlinear viscoelasticity,
{\em Quart. Appl. Math.} {\bf 35} (1977), 21-33.

\bibitem{marchand-etal_2012}
{\sc R.~Marchand, T.~McDevitt and R.~Triggiani},
An abstract semigroup approach to the third-order Moore-Gibson-Thompson partial differential
equation arising in high-intensity ultrasound: structural decomposition, spectral analysis,
exponential stability, 
{\em Math. Methods Appl. Sci.} {\bf 35} (2012), no.~15, 1896-1929. 

\bibitem{moore-gibson_1960}
{\sc F.K.~Moore and W.E.~Gibson},
Propagation of weak disturbances in a gas subject to relaxation effects,
{\em Journal of Aerospace Sciences and Technologies} {\bf 27} (1960), 117-127.

\bibitem{PandAMO}
{\sc L.~Pandolfi},
The controllability of the Gurtin-Pipkin equation: a cosine operator approach,
\emph{Appl.~Math.~Optim.}~\textbf{52} (2005), 143-165;
(a correction in \emph{Appl. Math. Optim.}~\textbf{64} (2011), 467-468)

\bibitem{PandLIBRO}
{\sc L.~Pandolfi},
{\em Distributed systems with persistent memory. Control and moment problems}, 
SpringerBriefs in Electrical and Computer Engineering. 
SpringerBriefs in Control, Automation and Robotics, Springer, Cham, 2014. x+152 pp.

\bibitem{pruess_1993}
{\sc J.~Pr\"uss},
{\em Evolutionary Integral Equations and Applications},
Monographs in Mathematics Vol.~ 87, Birkh\"auser Verlag, Basel, 1993;
also: [2012] reprint of the 1993 edition, Modern Birkh\"auser Classics, 
Birkh\"user/Springer Basel AG, Basel, 1993. xxvi+366 pp.

\bibitem{skubacevskii} 
{\sc A.L.~Skubacevski\v{i}},
\emph{Elliptic functional-differential equations and applications.}  
Birkh\"auser Verlag, Basel, 1997.

\bibitem{sova_1966}
{\sc M.~Sova},
Cosine operator functions, {\em Rozprawy Mat.} {\bf 49} (1966), 47 pp. 
 
\bibitem{tataru_1998}
{\sc D.~Tataru},
On the regularity of boundary traces for the wave equation.
{\em Ann.~Scuola~Norm.~Sup. Pisa} Cl.~Sci. (4) {\bf 26} (1998), no.~1, 185-206.  

\bibitem{thompson_1972}
{\sc P.A.~Thompson}, 
{\em Compressible-fluid Dynamics}, McGraw-Hill, New York, 1972.

\bibitem{triggiani_2016}
{\sc R.~Triggiani},
Sharp regularity theory of second order hyperbolic equations with Neumann boundary
control non-smooth in space,
{\em Evol.~Equ. Control Theory} {\bf 5} (2016), no.~4, 489-514. 

\end{thebibliography}
\end{document}